\numberwithin{equation}{section}
\newtheorem{thm}{Theorem}[section]
\newtheorem{cor}[thm]{Corollary}
\newtheorem{lem}[thm]{Lemma}
\newtheorem{prop}[thm]{Proposition}
\newtheorem{rem}[thm]{Remark}
\newcommand\B{{\mathcal B} }
\newcommand\G{{\mathcal G} }
\newcommand\Cz{{C_0(\Omega )}}
\newcommand{\bremark}{\begin{rem} \textup}
\newcommand{\eremark}{\end{rem} }
\newcommand{\cuad}{{\sqcap\kern-.68em\sqcup}}
\newcommand{\R}{{\mathbb{R}}}
\newcommand{\N}{{\mathbb{N}}}
\newcommand{\KK}{{\mathcal {K}}}
\renewcommand{\rho}{\varrho}
\renewcommand{\theta}{\vartheta}
\begin{document}

\title{Sign-changing stationary solutions and blowup for a nonlinear heat equation in dimension two}

\def\shorttitle{Blowup of sign-changing solutions}

\author{Fl\'avio Dickstein}
\address{Instituto de Matem\'atica\\
 Universidade Federal do Rio de Janeiro \\
 Caixa Postal 68530\\
 21944--970 Rio de Janeiro, R.J., Brazil}
\email{fdickstein@ufrj.br}
 \thanks{F. D.  was partially supported by CNPq (Brasil).}

\author{Filomena Pacella}
\address{Dipartimento di Matematica, Universit\`a di Roma ''La Sapienza"\\
P.le A. Moro 2\\
00185 Roma, Italy}
\email{pacella@mat.uniroma1.it}
\thanks{F. P.  was partially supported by PRIN 2009-WRJ3W7 grant(Italy).}

\author{Berardino Sciunzi}
\address{Dipartimento di Matematica, UNICAL,\\
Ponte Pietro  Bucci 31B, \\
87036 Arcavacata di Rende, Cosenza, Italy}
\email{ sciunzi@mat.unical.it}
\thanks{B. S.  was partially supported by ERC-2011-grant: \emph{Epsilon} and PRIN-2011: {\em Var. and Top. Met.}}

\keywords{Semilinear heat equation, finite-time blowup, sign-changing
stationary solutions, linearized operator, asymptotic behavior.}
\thanks{\it 2010 Mathematics Subject
 Classification: 35K91, 35B35, 35B44, 35J91}

\begin{abstract}
Consider the nonlinear heat equation
\begin{equation} \label{fAbs} \tag{\rm NLH}
v_t -\Delta  v= |v|^{p-1} v
\end{equation}
 in the unit ball of $\R^2$, with Dirichlet boundary condition. Let $u_{p,\KK}$ be  a radially symmetric, sign-changing stationary solution having a fixed number $\KK$ of nodal regions. We prove that the solution of~\eqref{fAbs} with initial value $\lambda u_{p,\KK} $ blows up in finite time  if $|\lambda -1|>0$ is sufficiently small and if $p$ is sufficiently large. The proof is based on the analysis of the asymptotic behavior of $u_{p,\KK}$ and of   the linearized operator  $L= -\Delta - p  | u_{p,\KK}  | ^{p-1} $.
\end{abstract}

\maketitle

\section{Introduction} \label{Infro}

Let us consider the nonlinear heat equation
\begin{equation} \label{NLH}
\begin{cases}
v_t- \Delta  v= |v|^{p-1} v, \qquad \text{in} \,\,\,\,\Omega\times (0,T)\\
v=0,\qquad\qquad\qquad \quad\,\,\text{on}\,\,\partial\Omega\times(0,T)\\
v(0)= v_0,\qquad\qquad\quad\,\, \,\,\text{in}\,\,\,\,\Omega\,,
\end{cases}
\end{equation}
where $\Omega\subset \R^N$, $N\in \N$, is a bounded domain, $p>1$, $T\in(0,+\infty]$ and
$$v_0\in \Cz=\{w\in C(\overline{\Omega}), w=0\,\, \text{on}\,\, \partial\Omega\}.$$
It is well known that there exists a unique classical solution of~\eqref{NLH}  which is defined over a maximal time interval $[0, T_{v_0})$. It is also well known that \eqref{NLH} admits both nontrivial global solutions and blowup solutions  for any $p>1$. In fact, given $\varphi \in \Cz$ and $\lambda \in\R$, let us consider $v_\lambda(\varphi) $
 the solution of \eqref{NLH} corresponding to $v_0=\lambda \varphi $. For $|\lambda|$ small, using that the first eigenvalue of the Laplace-Dirichlet operator is positive, it  is easy to construct global sub and supersolutions of \eqref{NLH}, ensuring that $v_\lambda(\varphi) $ is globally defined. On the other hand, $v_\lambda(\varphi) $ has negative energy for large $|\lambda|$ and, as a consequence, it blows up, see~\cite{Ball} or~\cite{Levine}. An interesting question is to understand what happens for intermediate values of $\lambda $. The case of positive functions $\Psi \geq 0$, $\Psi \not\equiv 0$, is better understood. It follows immediately from the maximum principle for the heat equation that there exists $\lambda ^*>0$ such that $v_\lambda(\Psi) $ is global if $0<\lambda<\lambda ^*$ and  $v_\lambda(\Psi) $ blows up if $\lambda>\lambda ^*$. (The borderline case $\lambda =\lambda^*$ may correspond to either globality~\cite{CazenaveH},~\cite{Giga},~\cite{Ni} or to blowup~\cite{Mizoguchi}.)

In other words, defining
\begin{equation*}
   \G=\{v_0\in \Cz, T_{v_0}=\infty \},
\end{equation*}
it holds that $\G^+=\{v_0\in \G, v_0\geq 0\}$ is star-shaped with respect to $0$. (In fact, $\G^+$ is convex.) In general, however, $\G$ is not star-shaped.  In fact, consider the stationary problem
\begin{equation}
	\label{problem}
\begin{cases}
-\Delta\,u\,=|u|^{p-1}u & \text{in $\Omega$,}  \\
u=0 & \text{on $\partial\Omega$.}
\end{cases}
\end{equation}
where $p>1$ and $\Omega$ is the unit ball in $\mathbb{R}^N$, $N>2$. In~\cite{CDW1} the authors showed that there exists $p^*<p_S:=(N+2)/(N-2)$ with the following property. If $u $ is a radial sign-changing solution of the  Lane Emden problem \eqref{problem}
(for subcritical $p$ there are countable many), there exists $\varepsilon>0$ such that if $p^*<p<p_S$ and if $0<|1-\lambda |<\varepsilon $ then $\lambda u \not\in \G$, i.e., $v_\lambda (u)$ blows up in finite time for $\lambda $ slightly greater or slightly smaller then $1$. Note that $u\in\G$, so that $\G$ is not star-shaped. Let us point out that an analogous result was proven for $N=3$ and $p$ close to $1$, see \cite{CDW4}. The results in \cite{CDW1} have been extended to case of general non symmetric domains in \cite{marino}.
Further analysis of the structure of the set $\G$ and of its complementary set
\begin{equation} \label{bset}
   \B=\{v_0\in \Cz, T_{v_0}<\infty \}
\end{equation}
 can be found in~\cite{CDW2} and~\cite{CDW3}.

 The results of \cite{CDW1} and \cite{CDW4} do not apply in the case $N=1$.  In fact, for $N=1$ and $p>1$
$v_\lambda (u)$ is global and converges uniformly to zero if $|\lambda| <1$, while $v_\lambda(u) $ blows up if $|\lambda |>1$. This is due to the anti-periodic structure of the one-dimensional problem, which implies that $v_\lambda(u) $ does not change sign between two consecutive nodes of $u$. In this way, in the one-dimensional case there is no essential difference in considering $u$ with or without a definite sign.

In this paper we treat the case $N=2$, which was left open in~\cite{CDW1}.  We recall that for any $p>1$ and $\KK\in \N$ there exists a unique (up to a sign) radial solution $u_{p,\KK} \in C^2(\overline\Omega)$ of \eqref{problem}  with $\KK$ nodal regions. The main goal of this work is to establish the following result.
\begin{thm} \label{main}
Let $u_{p,\KK}$ be a  sign-changing radial stationary solution of \eqref{NLH} (see \eqref{problem}) with $\KK$ nodal regions. Then there exists $p^*=p^*(\mathcal{K})>1$ and $\varepsilon=\varepsilon(p,\mathcal K)>0$ such that if $p>p^*$ and $0<|1-\lambda |<\varepsilon $, then
\[
\lambda u_{p,\KK}\in \B.
\]
\end{thm}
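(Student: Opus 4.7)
The proof adapts the strategy of~\cite{CDW1} to the two-dimensional setting. The overall scheme is a proof by contradiction: assume there is a sequence $\lambda_n\to 1$, $\lambda_n\ne 1$, with $v_{\lambda_n}(u_{p,\KK})\in\G$. Radial symmetry is preserved by~\eqref{NLH}, and one first argues that these global solutions stay bounded in $H^1_0(\Omega)$. Standard results on bounded trajectories of subcritical semilinear heat equations then show that each $v_{\lambda_n}(t)$ accumulates as $t\to\infty$ on the set of radial equilibria of~\eqref{problem}, namely $\{0\}\cup\{\pm u_{p,j}:j\in\N\}$. Since $E(\lambda_n u_{p,\KK})<E(u_{p,\KK})$ for $\lambda_n$ near $1$ and nodal energies are ordered, the admissible $\omega$-limits reduce to $0$ and to $\pm u_{p,j}$ with $j<\KK$ (and possibly $\pm u_{p,\KK}$ itself); the task is to exclude each of these candidates.

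The two main technical ingredients, already announced in the abstract, are the asymptotic analysis of $u_{p,\KK}$ and of the linearized operator $L_p=-\Delta-p|u_{p,\KK}|^{p-1}$ as $p\to\infty$. For the first, the plan is to describe the two-dimensional concentration of $p|u_{p,\KK}|^{p-1}$. By analogy with the positive case treated by Ren--Wei and Adimurthi--Grossi, one expects a Liouville-type profile: $\|u_{p,\KK}\|_\infty$ is uniformly bounded and $p|u_{p,\KK}|^{p-1}$ concentrates at a single interior point of each nodal region; from this one extracts the asymptotics of $E(u_{p,\KK})$ and of the mass $p\int_\Omega|u_{p,\KK}|^{p+1}$. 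For the second, the plan is to prove that for $p$ large the radial Morse index of $L_p$ equals $\KK$. Blowing up the eigenvalue problem around each nodal peak yields a limiting problem on $\R^2$ whose negative spectrum is explicit, producing $\KK$ negative radial eigenvalues $\mu^p_1<\cdots<\mu^p_\KK<0$ bounded away from $0$, with eigenfunctions essentially supported on a single nodal region each.

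Combining these ingredients delivers transversality and then blow-up. From $-\Delta u_{p,\KK}=|u_{p,\KK}|^{p-1}u_{p,\KK}$ one has
\[
\langle L_p u_{p,\KK},u_{p,\KK}\rangle \;=\; \int_\Omega|\nabla u_{p,\KK}|^2 - p\int_\Omega|u_{p,\KK}|^{p+1} \;=\; -(p-1)\int_\Omega|u_{p,\KK}|^{p+1}\;<\;0,
\]
so $u_{p,\KK}$ has nonzero projection on the negative eigenspace of $L_p$. Hence the line $\lambda\mapsto\lambda u_{p,\KK}$ crosses the (radial) stable manifold of $u_{p,\KK}$, of codimension $\KK\ge 1$, transversally at $\lambda=1$. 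A local invariant-manifold analysis at the equilibria $\pm u_{p,j}$ ($j\le\KK$) and at $0$, combined with the strict decay of the energy along the flow and the sign of an appropriate Nehari-type functional at $\lambda u_{p,\KK}$, forces $v_{\lambda_n}(t)$ to leave every neighborhood of the radial equilibrium set, contradicting globality.

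The principal obstacle is the two-dimensional asymptotic analysis. Unlike the case $N\ge 3$ treated in~\cite{CDW1}, where the rescaling around each peak converges to a standard Aubin--Talenti bubble on $\R^N$, in dimension two there is no Sobolev critical exponent and the natural concentration is of Liouville/Trudinger--Moser type, with only logarithmic corrections in $p$. Obtaining the sharp profile of $u_{p,\KK}$, controlling the interaction between its $\KK$ nodal peaks, and in particular ruling out spurious near-zero eigenvalues of $L_p$ (which would raise the radial Morse index and could spoil the transversality) requires delicate ODE/shooting analysis of the radial equation. Once these estimates are in place, the remaining invariant-manifold and energy arguments follow the blueprint of~\cite{CDW1}, yielding the dependence $p^*=p^*(\KK)$ and $\varepsilon=\varepsilon(p,\KK)$ of the theorem.
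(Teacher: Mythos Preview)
Your proposal takes a genuinely different route from the paper, and it has a real gap.

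The paper's proof is short and modular: it invokes Proposition~\ref{base} (which is Theorem~2.3 of~\cite{CDW3}) as a black box, reducing Theorem~\ref{main} to the single sign condition
\[
\int_\Omega u_{p,\KK}\,\varphi_{1,p}\;>\;0,
\]
where $\varphi_{1,p}$ is the \emph{first} positive eigenfunction of $L_p$. All the asymptotic work in the paper (Sections~\ref{preresults}--\ref{sdnfklg7757858}) is aimed at this one inequality: rescaling by $\varepsilon_p$, the first eigenpair $(\tilde\lambda_1(p),\tilde\varphi_{1,p})$ converges to the first eigenpair $(\lambda_1^*,\varphi_1^*)$ of the Liouville linearization $-\Delta-e^{z^*}$ on~$\R^2$, and a normalization of $\int_\Omega |u_p|^{p-1}u_p\,\varphi_{1,p}$ converges to $\int_{\R^2}e^{z^*}\varphi_1^*>0$. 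No Morse-index computation, no $\omega$-limit classification, and no invariant-manifold analysis beyond what is already packaged in Proposition~\ref{base}.

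Your scheme is more ambitious and, as written, incomplete. The observation $\langle L_p u_{p,\KK},u_{p,\KK}\rangle<0$ only says that $u_{p,\KK}$ has a nonzero component in the \emph{full} unstable eigenspace; this gives transversality of the line $\lambda\mapsto\lambda u_{p,\KK}$ to the local stable manifold of $u_{p,\KK}$, but transversality by itself only rules out $v_{\lambda_n}(t)\to u_{p,\KK}$, not blow-up. The mechanism behind Proposition~\ref{base} (and behind~\cite{CDW1,CDW3}) is finer: it uses that the projection onto the \emph{dominant} unstable mode $\varphi_{1,p}$ is nonzero and that $\varphi_{1,p}>0$, so the linearly amplified component eventually forces a signed supersolution/subsolution comparison and hence finite-time blow-up. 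Your proposal never establishes $\int_\Omega u_{p,\KK}\,\varphi_{1,p}\neq 0$, and without it the ``invariant-manifold plus Nehari functional'' step that is supposed to exclude convergence to $0$ or to $\pm u_{p,j}$ with $j<\KK$ is only a wish, not an argument. In addition, proving that the radial Morse index is \emph{exactly} $\KK$ requires controlling all near-zero eigenvalues after blow-up at each nodal peak and handling the interaction between peaks; this is substantially harder than the paper's analysis, which needs only the first eigenpair and the energy bound of Proposition~\ref{proenergy}.

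In short: the efficient path is to aim directly at the sign condition $\int_\Omega u_{p,\KK}\,\varphi_{1,p}>0$ for $p$ large and then quote Proposition~\ref{base}. Your program could perhaps be completed, but it would require proving that sign condition anyway (or something equivalent to it), and the extra Morse-index and $\omega$-limit machinery buys nothing for the conclusion.
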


Our result is analogous in spirit to the one in \cite{CDW1} cited above. In fact, the proofs are based on similar strategies. They are both consequences of the following proposition, which is a particular case of Theorem~2.3 of \cite{CDW3}.
\begin{prop} \label{base}
Let $u$ be a sign changing solution of~\eqref{problem} and let
$\varphi_{1} $ be a positive eigenvector of the self-adjoint operator $L $
given by $L \varphi= - \Delta  \varphi- p|u |^{p-1}\varphi$, for  $\varphi\in H^2(\Omega) \cap H^1_0(\Omega) $. Assume that
\begin{equation}\label{sign}
\int _\Omega u\,  \varphi_{1} \not= 0.
\end{equation}
Then there exists $\varepsilon >0$ such that if $0<|1-\lambda |<\varepsilon $, then the solution $v_\lambda (u)$ of~\eqref{NLH} with the initial value $\lambda u $ blows up in finite time.
\end{prop}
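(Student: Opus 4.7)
My plan is to argue by contradiction: fix $\lambda$ with $0<|1-\lambda|$ small, assume $v_\lambda:=v_\lambda(u)$ is globally defined, and deduce a contradiction from hypothesis~\eqref{sign}. The strategy combines a projection/ODE analysis along the direction $\varphi_1$ with a dynamical-systems argument at the hyperbolic stationary point $u$.

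First I would check that the first eigenvalue of $L$ is strictly negative. Testing the Rayleigh quotient at $u$ and using $-\Delta u=|u|^{p-1}u$ gives $\langle Lu,u\rangle=\int|\nabla u|^2-p\int|u|^{p+1}=(1-p)\int|u|^{p+1}<0$, so $\mu_1<0$. Setting $w(t)=v_\lambda(t)-u$ and expanding $f(s)=|s|^{p-1}s$ around $u$, together with $-\Delta\varphi_1=\mu_1\varphi_1+p|u|^{p-1}\varphi_1$ and integration by parts, one derives the ODE
\[
h'(t)=-\mu_1\,h(t)+N(w(t)),\qquad h(t):=\int_\Omega w(t)\,\varphi_1\,dx,
\]
where $N(w)=\int_\Omega\bigl(f(u+w)-f(u)-f'(u)w\bigr)\varphi_1\,dx$ is a nonlinear remainder satisfying $|N(w)|\le C\|w\|_\infty^2$ on bounded sets. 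By \eqref{sign}, the initial value $h(0)=(\lambda-1)\int u\varphi_1$ is nonzero; since $-\mu_1>0$, a Gr\"onwall-type comparison forces $|h(t)|$ to grow exponentially as long as $w$ remains in a small $C_0$-ball. Hence $v_\lambda(t)$ must exit every sufficiently small neighborhood of $u$ in finite time.

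If $v_\lambda$ were global, parabolic regularity in the two-dimensional subcritical regime bounds it in $L^\infty$, and the Lyapunov functional $E(v)=\tfrac12\int|\nabla v|^2-\tfrac{1}{p+1}\int|v|^{p+1}$ (strictly decreasing along nonconstant orbits), combined with parabolic compactness, forces $v_\lambda(t)\to\bar u$ in $C^2$ for some stationary solution $\bar u$ of \eqref{problem}. An elementary computation shows $E(\lambda u)<E(u)$ for $0<|1-\lambda|<\varepsilon$, whence $E(\bar u)<E(u)$ and in particular $\bar u\neq\pm u$.

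The main obstacle is ruling out convergence to such a lower-energy stationary solution $\bar u$. Indeed, using $-\Delta\bar u=|\bar u|^{p-1}\bar u$, a direct calculation yields the identity $N(\bar u-u)=\mu_1\int(\bar u-u)\varphi_1$, so the scalar ODE above is asymptotically consistent with $v_\lambda(t)\to\bar u$ and the one-dimensional projection alone cannot close the argument. To finish, I would invoke the stable/unstable manifold theorem at the hyperbolic equilibrium $u$: condition \eqref{sign} says precisely that $u$ has a nontrivial component along the unstable eigenspace spanned by $\varphi_1$, so the segment $\{\lambda u\}$ is transverse to $W^s_{\mathrm{loc}}(u)$ at $\lambda=1$, and for $0<|1-\lambda|<\varepsilon$ the datum $\lambda u$ is shadowed by one of the two branches of $W^u_{\mathrm{loc}}(u)$. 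The heart of the proof, corresponding to the content of Theorem~2.3 of \cite{CDW3}, is then to show that both forward branches of $W^u(u)$ lie entirely in the blow-up set $\B$; this rests on a delicate topological/energy analysis of the stationary set of \eqref{problem} and of the level sets of $E$ near the saddle $u$, and is where essentially all the technical weight of the proposition resides.
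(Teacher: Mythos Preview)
The paper does not supply its own proof of this proposition: it is stated there as a particular case of Theorem~2.3 of \cite{CDW3} and simply quoted. So there is no argument in the paper to compare against beyond this citation.

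Your sketch follows the strategy of that reference: project onto $\varphi_1$, use $\mu_1<0$ together with \eqref{sign} to obtain exponential drift of $h(t)$ away from $0$, and then analyse the long-time behaviour of a putative global solution via the Lyapunov functional. You are also candid that the decisive step---showing that orbits leaving $u$ along the unstable directions must blow up rather than settle on a lower-energy equilibrium---is not supplied here and is precisely the content of the cited theorem. A few intermediate claims are overstated, however. First, that a global solution of \eqref{NLH} is automatically bounded in $L^\infty$ is itself a nontrivial result (cf.\ \cite{Giga}, \cite{Ni}), not a consequence of ``parabolic regularity''. Second, passing from ``the $\omega$-limit set consists of equilibria'' to ``$v_\lambda(t)\to\bar u$ for a single $\bar u$'' is not automatic, since $|v|^{p-1}v$ is not analytic for general $p$ and a Lojasiewicz--Simon argument is not immediately available. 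Third, since $u$ is sign-changing its Morse index is at least $2$, so $W^u(u)$ is not one-dimensional with ``two branches''; the transversality/shadowing picture must be reformulated accordingly (condition \eqref{sign} only guarantees a nonzero component along the \emph{single} direction $\varphi_1$). None of these points derail the overall strategy, but they are exactly the technical issues the argument in \cite{CDW3} has to handle, and your write-up should flag them as external inputs rather than routine facts.
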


 Proposition \ref{base}  says that the linear instability of the stationary solution expressed by \eqref{sign} yields not only nonlinear instability, but also blowup. A similar result for positive solutions of the nonlinear heat equation and of the nonlinear wave equation may be found in~\cite{ks}. In view of Proposition \ref{base}, Theorem \ref{main} holds if we prove the following:

\begin{thm}\label{mainpreliminarddd}
Given $\KK\ge 2$, let  $u$ be a radial solution to \eqref{problem} having $\KK$ nodal regions.
Then there exists $ p^*= p^*(\KK)$ such that for $p> p^*$
\begin{equation}\nonumber
\int_\Omega\,u\,\varphi_{1}\,>0,
\end{equation}
 where $\varphi_{1}$ is the first positive eigenfunction of the linearized operator $L$ at $u$.
\end{thm}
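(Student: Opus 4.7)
Multiplying $-\Delta u=|u|^{p-1}u$ by $\varphi_1$ and integrating by parts, then using the eigenvalue equation $L\varphi_1=\mu_1\varphi_1$, one obtains the identity
$$(p-1)\int_\Omega |u|^{p-1}u\,\varphi_1\,dx = -\mu_1\int_\Omega u\,\varphi_1\,dx.$$
The Rayleigh quotient of $L$ evaluated at $u$ itself equals $-(p-1)\int_\Omega |u|^{p+1}/\int_\Omega u^2<0$, hence $\mu_1<0$. The two integrals above therefore have the same sign, and it suffices to prove that $\int_\Omega|u|^{p-1}u\,\varphi_1\,dx>0$ for $p$ large.

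\textbf{Asymptotic analysis as $p\to\infty$ in $N=2$.} The core of the proof is a careful asymptotic description of $u_{p,\KK}$ via the Moser--Trudinger-type scaling of dimension two. On each nodal annulus $A_k=\{r_{k-1}<|x|<r_k\}$, rescale $u_{p,\KK}$ around the point of maximum of $|u_{p,\KK}|$ on $A_k$ by the scale $\beta_{k,p}^2=p\,\|u_{p,\KK}\|_{L^\infty(A_k)}^{p-1}$. The rescaled function converges to an explicit solution of a Liouville equation on $\R^2$, from which one reads off the limits $\alpha_k:=\lim_p\|u_{p,\KK}\|_{L^\infty(A_k)}$ and the limit nodal radii. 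The key structural fact to be established is that the $\alpha_k$ are strictly ordered, so that a single index $k_0$ satisfies $\alpha_{k_0}>\alpha_k$ for all $k\neq k_0$. A parallel asymptotic analysis of the first eigenpair $(\mu_1,\varphi_1)$ of $L=-\Delta-p|u|^{p-1}$ shows that, under the same rescaling, $\varphi_1$ (normalized by $\|\varphi_1\|_{L^\infty}=1$) converges to the positive ground state of the corresponding linearized limit problem and in particular is bounded below on a fixed portion of $A_{k_0}$ where $|u_{p,\KK}|\ge (1-\eta)\alpha_{k_0}$.

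\textbf{Sign of the weighted integral.} Writing $\int_\Omega|u|^{p-1}u\,\varphi_1=\sum_{k=1}^\KK\varepsilon_k\int_{A_k}|u|^p\varphi_1$ with $\varepsilon_k=\mathrm{sign}(u|_{A_k})\in\{\pm 1\}$, one estimates for $k\neq k_0$
$$\int_{A_k}|u|^p\varphi_1\,dx\le \|u_{p,\KK}\|_{L^\infty(A_k)}^p\,\|\varphi_1\|_{L^1}=O\!\left(\alpha_k^p\right),$$
while the lower bound on $\varphi_1$ from Step 2 gives $\int_{A_{k_0}}|u|^p\varphi_1\,dx\ge c\,\alpha_{k_0}^p$. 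Since $\alpha_k/\alpha_{k_0}<1$ for $k\neq k_0$, the dominant term is the one with $k=k_0$, which has sign $\varepsilon_{k_0}$. Fixing the orientation of $u_{p,\KK}$ so that $\varepsilon_{k_0}=+1$ then gives $\int_\Omega|u|^{p-1}u\,\varphi_1>0$ for $p\ge p^*(\KK)$, and Step 1 concludes the proof.

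\textbf{Main obstacle.} The hardest step is the two-dimensional asymptotic analysis itself: proving that the limit values $\alpha_k$ are mutually distinct (so that a single nodal region carries the exponentially dominant mass) and controlling the first eigenpair of $L$ in the dimension-two regime, governed by a Liouville-type rather than a pure-power limit equation as in $N\ge 3$. Without the strict ordering of the $\alpha_k$, or without a uniform lower bound on $\varphi_1$ on the dominant annulus, the sign of the leading term in the decomposition above could be ambiguous. This asymptotic machinery for both $u_{p,\KK}$ and $L$ is the essential technical content of the paper; once it is in place, the sign determination reduces to the short computation in Step 3.
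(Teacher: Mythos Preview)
Your reduction in Step~1 is exactly the one the paper uses, and the overall architecture---rescale to a Liouville limit, analyse the limit eigenpair, then read off the sign---is also the paper's. The difficulty is in Step~3, where your quantitative comparison breaks down.

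The lower bound $\int_{A_{k_0}}|u|^p\varphi_1\ge c\,\alpha_{k_0}^p$ is false. With $\|\varphi_1\|_{L^\infty}=1$, the region on which $\varphi_1$ stays bounded below and $|u|$ is close to its maximum is the concentration ball of radius $\sim\varepsilon_p$, where $\varepsilon_p^{-2}=p\,u_p(0)^{p-1}$; its area is $\sim (p\,\alpha_{k_0}^{p-1})^{-1}$, which cancels $\alpha_{k_0}^p$ and leaves a lower bound of order $\alpha_{k_0}/p$, not $\alpha_{k_0}^p$. Against this, your crude upper bound $\int_{A_k}|u|^p\varphi_1\le \pi\,\|u\|_{L^\infty(A_k)}^p$ is useless unless you know $\alpha_k<1$ for $k\ne k_0$, which you have not established (and the paper never claims). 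Moreover, going from $\|u\|_{L^\infty(A_k)}\to\alpha_k$ to $\|u\|_{L^\infty(A_k)}^p=O(\alpha_k^p)$ needs a convergence rate $o(1/p)$, not merely $o(1)$.

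The paper avoids this by a different mechanism. It rescales only around the origin (which is the global maximum by an energy-monotonicity argument, so automatically $k_0=1$ and $\varepsilon_{k_0}=+1$ under the convention $u(0)>0$), sets $\tilde\varphi_{1,p}(x)=\varepsilon_p\varphi_{1,p}(\varepsilon_p x)$ so that $\|\tilde\varphi_{1,p}\|_{L^2(\R^2)}=1$, and proves two ingredients: (a) a uniform energy bound $p\int_\Omega|u_p|^{p+1}\le\mathcal E(\KK)$, and (b) the radial Strauss decay $\tilde\varphi_{1,p}(x)\to0$ as $|x|\to\infty$, uniformly in $p$. The tail $\{|x|>R\}$ in the rescaled integral is then controlled, via H\"older and (a), by a constant times $\|\tilde\varphi_{1,p}\|_{L^\infty(|x|\ge R)}$, which (b) makes small; the inner part $\{|x|\le R\}$ converges to $\int_{\R^2} e^{z^*}\varphi_1^*>0$ by the Liouville limit and the $L^2$-convergence $\tilde\varphi_{1,p}\to\varphi_1^*$. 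The strict ordering of the $\alpha_k$ plays no role in the sign computation; the tail is killed by eigenfunction decay plus the energy bound, not by an exponential separation of the form $(\alpha_k/\alpha_{k_0})^p$.
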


The proof of Theorem~\ref{main} relies on the fact that, in an appropriate sense, the limit problem of the Lane Emden problem \eqref{problem} is the  Liouville problem
\begin{equation} \label{Liouville}
   \begin{cases}
      -\Delta u=e^u ,  & \text{in } \R^2\\
      e^u\in L^1(\R^2),
   \end{cases}
\end{equation}
see \cite{adi}, \cite{GGP1}, \cite{GGP2}. To be more precise, we consider a suitable scaling $\tilde u $ of $u$, which is defined on a ball
$\tilde\Omega $ of radius $r(p)$ such that $r(p)\to \infty$ as $p\to \infty$. We define as well a  rescaling  $\tilde L$ of the linear operator $L$,  possessing a first eigenvector $\tilde \varphi_1$ associated to a first eigenvalue $\tilde\lambda _1$. Extending $\tilde u $  and  $\tilde \varphi_1$  identically equal to zero outside $\tilde \Omega $, it turns out that
\begin{equation} \label{psi_limit}
   |\tilde u |^{p-1}\tilde u\underset{p\rightarrow \infty}{\longrightarrow } e^{z^*},
\end{equation}
uniformly over the compact sets of $\R^2$, where $z^*$ is the unique radial solution of \eqref{Liouville} such that $z^*(0)=0$ and $\nabla z^*(0)=0$. Moreover, the linearized limit operator $L^*=-\Delta -e^{z^*}$ has a negative first eigenvalue $ \lambda^* _1$ and a positive corresponding eigenfunction $\varphi^*_1$ and
\begin{equation}  \label{lambda_limit}
   \tilde \lambda _1\underset{p\rightarrow \infty}{\longrightarrow }  \lambda^* _1\,,
\end{equation}
\begin{equation} \label{phi_limit}
   \tilde \varphi_1\underset{p\rightarrow \infty}{\longrightarrow } \varphi^*_1\qquad \text{in}\,\,\,L^2(\R^2)\,.
\end{equation}
  Using \eqref{psi_limit} and \eqref{phi_limit} we show that
\begin{equation}\label{int}
   \int_{\tilde\Omega}  |\tilde u |^{p-1}\tilde u\,\varphi^*_1 \underset{p\rightarrow \infty}{\longrightarrow }
   \int_{\R^2} e^{z^*}\varphi^*_1\,.
\end{equation}
Since  both $e^{z^*}$  and $\varphi^*_1$  are positive, the integral at the left hand side of \eqref{int}  is positive for large $p$. By a simple computation, this allows to conclude that \eqref{sign}  holds. Then Theorem~\ref{main} follows from Proposition~\ref{base} and Theorem \ref{mainpreliminarddd}.

To obtain \eqref{psi_limit}-\eqref{int} we exploit the analysis of \cite{GGP2} concerning the case of two nodal regions. For $\KK=2$, the limit problem associated to $u^+$, the positive part of  $u$, is a regular Liouville problem in  the whole space $\R^2$ (while the negative part $u^-$ is associated to a singular  Liouville problem).  Using the results of \cite{GGP2}, we have been able to prove that \eqref{psi_limit} holds for solutions having any fixed number $\KK$ of nodal regions.  There are two crucial steps in the proofs of  \eqref{lambda_limit}-\eqref{int} for general $\KK$, the variational characterization \eqref{variational} of $u$, which is a consequence of the results of \cite{weth}, and the energy estimate \eqref{energy}.

For $N\ge 3$ and subcritical $p<p_S$, it was shown in \cite{CDW4} that $\lambda u\in \B$ if $|1-\lambda|$ and $p_S-p$ are small enough ($\lambda \ne 1$), independently of the number $\KK$ of oscillations of the stationary solution $u$.  We were not able to obtain here an analogous result, since $p$ and $\lambda$ depend on $\KK$ in Theorem \ref{main}.
There is a distinguished difference between the two cases. In the case $N\ge 3$, the limit problem of  \eqref{problem} for $p\to p_S$ is still  the same problem \eqref{problem} for $p=p_S$, which has a (unique, up to dilations and translations) positive regular solution. However, in the present case $N=2$, there is qualitative, other than quantitative, transformation when passing to the limit $p\to \infty$.  This explains why the analysis here is more involved.

The rest of the paper is organized as follows. In Section~\ref{preresults} we obtain some preliminary results that will be useful in the sequel. In particular, we obtain the energy estimate in Proposition~\ref{proenergy} and the variational characterization  in Proposition~\ref{1575}. In Section~\ref{jnvdbndfnb�snbkjcvn}, we carry out an  asymptotic spectral analysis, proving \eqref{lambda_limit} and \eqref{phi_limit}. Finally, in
Section~\ref{sdnfklg7757858} we show \eqref{int}, which yields Theorem~\ref{mainpreliminarddd} and Theorem~\ref{main}.

\section{Preliminary results}\label{preresults}

It is well known that, for $p>1$ and $\KK\ge 1$ \eqref{problem} admits a unique radially symmetric solution $u_{p,\KK}\in C^2(\overline{\Omega})$ having $\KK$ nodal regions and such that $u_{p,\KK}(0) >0$, see e.g. \cite{yana}. In this section we  establish bounds on the energy of  $u_{p,\KK}$ and on its $C_0$ norm which will be crucial for the proof of our main result. These estimates extend those in  \cite{ianni} for the case $\KK=2$.

\begin{prop}\label{proenergy}
There exist $ p^*=p^*(\KK)\in\R$ and $\mathcal{E}=\mathcal{E}(\KK)>0$ such that
\begin{equation}\label{energy}
p\int_{\Omega}\,|u_{p,\KK}|^{p+1}\,dx= p\int_{\Omega}\,|\nabla u_{p,\KK}|^2\,dx\leq \mathcal{E}
\end{equation}
for  $p> p^*$.
\end{prop}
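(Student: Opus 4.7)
The equality $p\int_{\Omega}|u_{p,\KK}|^{p+1}\,dx = p\int_{\Omega}|\nabla u_{p,\KK}|^2\,dx$ is the standard Nehari identity: multiply \eqref{problem} by $u_{p,\KK}$ and integrate by parts using the Dirichlet boundary condition. The real content of the proposition is therefore the uniform upper bound on $p\int_{\Omega}|u_{p,\KK}|^{p+1}\,dx$, and my plan is to reduce this to an explicit test-function computation via the variational characterization that will be established in Proposition~\ref{1575}.

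That characterization, due ultimately to~\cite{weth}, expresses $u_{p,\KK}$ as an energy minimizer on the class $\mathcal{M}_{p,\KK}$ of radial $H^1_0(\Omega)$-functions $v$ with $\KK$ nodal regions satisfying the Nehari identity $\int_{A_i(v)}|\nabla v|^2 = \int_{A_i(v)}|v|^{p+1}$ on each nodal component $A_i(v)$. On $\mathcal{M}_{p,\KK}$ the functional $J_p(v) = \frac{1}{2}\int|\nabla v|^2 - \frac{1}{p+1}\int|v|^{p+1}$ reduces to $\frac{p-1}{2(p+1)}\int|v|^{p+1}$, so it is enough to exhibit one $v_*\in\mathcal{M}_{p,\KK}$ for which $p\int_{\Omega}|v_*|^{p+1}\,dx$ stays bounded as $p\to\infty$.

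To construct $v_*$ I would partition $\Omega$ into $\KK$ concentric radial regions $A_1,\dots,A_\KK$ (with $A_1$ an inner disk and $A_2,\dots,A_\KK$ true annuli) and take $\omega_i>0$ on each $A_i$ to be the positive radial Mountain-Pass ground state of $-\Delta w = w^p$ in $A_i$, $w=0$ on $\partial A_i$. Then $v_* := \sum_{i=1}^{\KK}(-1)^{i+1}\omega_i$ (each $\omega_i$ extended by zero outside $A_i$) is a radial member of $\mathcal{M}_{p,\KK}$ with the correct number of nodal regions. To bound $p\int_{A_i}|\omega_i|^{p+1}$ I would argue annulus by annulus: for the innermost disk of radius $r_1$, the scaling $w(x) = r_1^{-2/(p-1)}u_p(x/r_1)$ reduces the computation to the unit disk, and the Adimurthi--Grossi asymptotic $p\int_{B_1}|u_p|^{p+1}\to 8\pi e$ combined with $r_1^{-4/(p-1)}\to 1$ gives a uniform bound; for each true annulus $A_i$, the Mountain-Pass minimality of $\omega_i$ allows a comparison with an extension-by-zero of a positive Lane-Emden solution on a small disk inscribed in $A_i$, and the same rescaling argument applies.

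The main technical obstacle is the $i\geq 2$ step, where the comparison must preserve radial symmetry: a non-concentric inscribed disk destroys radiality, so one has either to radialize (symmetrizing over the rotational orbit of the inscribed disk while preserving positivity and the Nehari identity) or to build a radial Moser-type bump on each $A_i$ directly and verify by hand that $p\, A_p^{(p+1)/(p-1)}B_p^{-2/(p-1)}$ (with $A_p,B_p$ the Dirichlet and $L^{p+1}$ seminorms of the bump) stays bounded as $p\to\infty$. Either approach should yield a constant $\mathcal{E}(\KK)$ of order $\KK\cdot 8\pi e$ for which \eqref{energy} holds for all $p>p^*(\KK)$.
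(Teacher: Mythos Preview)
Your approach is essentially the paper's: reduce to the Bartsch--Weth variational characterization, build a test configuration from the positive radial ground states on a central disk and $\KK-1$ surrounding annuli, invoke Adimurthi--Grossi for the disk piece, and handle each annulus with an explicit radial Moser-type logarithmic bump (your second option for the ``main technical obstacle''). The only notable differences are that the paper phrases the variational input as the linear min-max $\beta_\KK=\inf_{\dim V\ge\KK}\sup_V E_p$ rather than the nodal Nehari minimum, and takes $p$-dependent partition radii $e^{-\alpha_1 p}<\dots<e^{-\alpha_{\KK-1}p}<1$, which streamlines the annulus computation; note also that in the paper the variational characterization is Proposition~\ref{prorem} (not Proposition~\ref{1575}) and is stated \emph{after} the present result, so the needed inequality $E_p(u_{p,\KK})\le\beta_\KK$ is derived directly inside this proof from Bartsch--Weth plus uniqueness of the radial solution with $j$ nodal regions, avoiding any circularity.
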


\begin{proof}
Consider the energy functional
\begin{equation}\nonumber
E_p(u)= \frac{1}{2}\|\nabla u\|_2^2-\frac{1}{p+1}\|u\|_{p+1}^{p+1}
\end{equation}
for $u\in H^1_{0,r}(\Omega)$, the space of radial functions of $ H^1_{0}(\Omega)$. Note that, if $u$ is a  solution of~\eqref{problem}, then
\[
E_p(u)=\frac {p-1} {2(p+1)}\int_\Omega |\nabla u|^2.
\]
In this way, Proposition~\ref{proenergy} will be proven once we bound $pE_p(u_{p,\KK})$  uniformly in $p$.  To do so, we first remark that the proofs of Theorem 1.2 and of Theorem 1.4 of \cite{weth} still hold when applied  to the space $ H^1_{0,r}(\Omega)$. As a consequence, we obtain a sequence of distinct solutions of \eqref{problem} $\pm v_{p,j}$, $j\in\mathbb{N}$, such that
\begin{itemize}
\item[a)] $\|v_{p,j}\|_{ H^1_{0,r}(\Omega)}\rightarrow \infty$ as $j\rightarrow \infty$.
\item[b)] $v_{p,1}$ is positive and $v_{p,j}$ changes sign for $j\geq 2$. Moreover $v_{p,j}$ has at most $j$ nodal regions.
    \item[c)] $E_p(v_{p,j})\leq \beta_j$, where
    \begin{equation}\label{beta}
        \beta_j=\underset{\underset{dim (V)\geq j}{V\subset H^1_{0,r}(\Omega)}}{\inf}\sup_{v\in V}\,E_p(v),
    \end{equation}
\end{itemize}
We next observe that,  by the uniqueness (up to a sign) of the radial solution of \eqref{problem} having $j$ nodal regions, we may write that
\begin{equation} \label{app1}
    v_{p,j}= u_{p,j}
\end{equation}
for all $j$. We shall now use $c)$ here above to estimate $pE_p(u_{p,j})$ independently of $p$. Our arguments extend those employed in  \cite{ianni} for the case $j=2$ of two nodal regions.

Given $\KK\in \N$, fix $\alpha_1,\ldots,\alpha_{\KK-1}$ positive numbers satisfying $\alpha_j>\alpha_{j+1}$ for
 $j=1,\ldots,\KK-1$ and set $\alpha _\KK=0$. Consider the $\KK$-dimensional subspace $V_{\KK}^p$ of $H^1_{0,r}(\Omega)$ spanned by the $\KK$ linearly independent functions
$
 g_{p,1},\ldots,g_{p,\KK},
$
 defined in the following way.
 \begin{itemize}
 \item[1)] $g_{p,1}$ is the unique positive radial solution to \eqref{problem} in the ball $$B_p=\{x\in\mathbb{R}^2\,\,:\,\,|x|\leq e^{-\alpha_1p}\}.$$
     \item[2)] For $2\leq j\leq \KK$, $g_{p,j}$ is the unique radial positive solution to \eqref{problem} in the annulus $$A_{p,j}=\{x\in\mathbb{R}^2\,\,:\,\,e^{-\alpha_{j-1}p}\leq |x|\leq e^{-\alpha_{j}p}\}.$$
 \end{itemize}

Let us assume for the moment that there exist $\overline p>1$
 and constants $c_1,\ldots ,c_{\KK}$ such that
 \begin{equation}\label{fil6}
 pE_p(g_{p,j})\leq c_j\qquad \forall p>\overline p,\qquad 1\leq j\leq \KK.
 \end{equation}
Since $g_{p,j}$ belongs to the Nehari manifold
 \begin{equation}\nonumber
 \mathcal{N}_p=\{u\in H^1_{0,r}(\Omega)\setminus \{0\}\,\,:\,\,\|\nabla u\|_2^2=\|u\|_{p+1}^{p+1}\}
 \end{equation}
it is easy to see that $E_p(t g_{p,j})\le  E_p(g_{p,j})$ for all $t\in \R$. By \eqref{fil6},
 \begin{equation*}
 pE_p\big(\overset{\KK}{\underset{j=1}{\sum}} t_jg_{p,j}\big)\leq \overset{\KK}{\underset{j=1}{\sum}}pE_p(g_{p,j})\leq \overset{\KK}{\underset{j=1}{\sum}} c_j,
\end{equation*}
 for all $t_j\in \mathbb{R}$ and $p>\overline p$.  Hence, using \eqref{app1} and $c)$ here above, we get
 \begin{equation}\nonumber
 pE_p(u_{p,\KK})\leq \sup_{v\in V_{\KK}^p}  pE_p(v)\leq \overset{\KK}{\underset{j=1}{\sum}} c_j,
 \end{equation}
 showing \eqref{energy} for any $p>\overline p$. To conclude the proof, it remains to show \eqref{fil6}.

 \bigskip
  We start by estimating $pE_p(g_{p,1})$. Note that
 \begin{equation}\nonumber
    g_{p,1}(|x|)=e^{\frac{2\alpha_1p}{p-1}}w_p(e^{\alpha_1p}|x|),
 \end{equation}
 where $w_p$ is the unique positive solution to \eqref{problem} in the unit ball. Thus,
 \begin{equation}
    \int_{B_p}|\nabla g_{p,1}|^2=e^{\frac {4\alpha_1p} {p-1}} \int_{B_1}|\nabla w_{p}|^2.
 \end{equation}
 Moreover, it follows from Lemma 2.1 of \cite{adi} that
 \begin{equation}\nonumber
 p\int_{B_1}|\nabla w_p|^2\underset{p\rightarrow \infty}{\longrightarrow }8\pi e.
 \end{equation}
 Therefore
 \begin{equation}\nonumber
 pE_p(g_{p,1})=\frac {2p(p+1)} {p-1}\int_{B_p}|\nabla g_{p,1}|^2\underset{p\rightarrow \infty}{\longrightarrow }16\pi e^{4\alpha_1+1}\,,
 \end{equation}
 and this gives \eqref{fil6} for $j=1$.

 \bigskip
  We now estimate $pE_p(g_{p,j})$ for $j\ge 2$. Let $z_{p,j}$ be the positive (radial) solution of
 \begin{equation}\nonumber
 \underset{H^1_{0,r}(A_{p,j})}{\max} \,\{\int_{A_{p,j}}|u|^{p+1}, \,\int_{A_{p,j}} |\nabla u|^2=p^{-1}\,\} =:I_{p,j}.
 \end{equation}

Then $z_{p,j}$ satisfies $-\Delta z_{p,j}=(pI_{p,j})^{-1}\, z_{p,j}^p$, so that $ g_{p,j}=(pI_{p,j})^{-\frac{1}{p-1}}z_{p,j}$. Hence,
 \begin{equation}\label{app2}
p\int_{A_{p,j}}|\nabla g_{p,j}|^2 =(pI_{p,j})^{-\frac{2}{p-1}}.
 \end{equation}
 Next, inspired by the results in \cite{grossi} on the asymptotic  behavior of the radial positive solution in an annulus as $p\rightarrow \infty$, we  set $\Delta _j=\alpha_{j-1}-\alpha_j$ and consider
 \begin{equation*}
     w_{p,j}(x)=(2\pi \Delta _j)^{-\frac {1} {2}}p^{-1}
      \begin{cases}
           \alpha_{j-1}p+\log r,\qquad &e^{-\alpha_{j-1}p}\leq r\leq e^{-\frac{(\alpha_j+\alpha_{j-1})}{2}p},\\
          -\alpha_j\,p-\log r,\qquad &e^{-\frac{(\alpha_j+\alpha_{j-1})}{2}p}\leq r\leq e^{-\alpha_jp},
      \end{cases}
\end{equation*}
where $r=|x|$. Since $w_{p,j}\in H_{0.r}^1(A_{p,j})$ and
$\|\nabla w _{p,j}\|_{L^2(A_{p,j})}^2=p^{-1}$ we get
\begin{equation} \label{app3}
   \int_{A_{p,j}}w _{p,j}^{p+1}\le I_{p,j}.
\end{equation}
Then,
\begin{equation*}
      \int_{A_{p,j}}  w _{p,j}^{p+1}\ge (2\pi)^{-\frac {p-1} {2}} \Delta _j^{-\frac {p+1} {2}}p^{-(p+1)}
      \underset{e^{-\alpha_{j-1}p}}{\overset{e^{-\frac{(\alpha_j+\alpha_{j-1})}{2}p}}{\int}}
      (\alpha_{j-1}p+\log r)^{p+1}r\,dr.
\end{equation*}
Through the change of variables $s=e^{\frac{\alpha_{j-1}+\alpha_j}{2}\,p}\,\,r$, we get
\begin{multline}\label{app4}
   \int_{A_{p,j}}  w_{p,j}^{p+1}
   \geq (2\pi)^{-\frac {p-1} {2}} \Delta _j^{-\frac {p+1} {2}}e^{(\alpha_{j-1}+\alpha_j)\,p}
  \underset{e^{-\frac{p\Delta _j }{2}}}{\overset{1}{\int}}\,
  \left (\frac {\Delta _j} {2}+p^{-1}\log s\right )^{p+1}s\,ds\\
  = 2^{-\frac {3p+1} {2}} \pi^{-\frac {p-1} {2}} \Delta _j^{\frac {p+1} {2}}e^{(\alpha_{j-1}+\alpha_j)\,p}
  \underset{e^{-\frac{p\Delta _j }{2}}}{\overset{1}{\int}}\, \left (1+\frac {2} {p\Delta _j}\log s\right )^{p+1}s\,ds.
\end{multline}
Using the Dominated Convergence Theorem, we obtain
\begin{equation} \label{app5}
   \underset{e^{-\frac{p\Delta _j }{2}}}{\overset{1}{\int}}\,  \left (1+\frac {2} {p\Delta _j}\log s\right )^{p+1}s\,ds
   \underset{p\rightarrow \infty}{\longrightarrow }
   \int_0^1 s^{2(\Delta _j)^{-1}+1}\, ds=\frac {\Delta _j} {2+2\Delta _j}.
\end{equation}
It then follows from \eqref{app2}-\eqref{app5} that
\begin{equation*}
   pE_p(g_{p,j})=\frac {2p(p+1)} {p-1}\int_{A_{p,j}}|\nabla g_{p,j}|^2\le 5\pi (\Delta _j)^{-1}e^{-2(\alpha_{j-1}+\alpha_j)}
\end{equation*}
if $p$ is large enough. This concludes the proof.
\end{proof}
\begin{rem}
 Note that $\min_{j\le \KK} \Delta _j\to 0$ as $\KK\to \infty$. Thus, the energy estimate \eqref{energy} is not independent of $\KK$.
\end{rem}
As a consequence of Proposition \ref{proenergy} and of Theorem 1.2 of \cite{weth} we can show a  nice   variational characterization of the
radial solutions $u_{p,\KK}$  of \eqref{problem}.
\begin{prop}\label{prorem}
We have
\begin{equation}\label{variational}
E_p(u_{p,\KK})=\underset{ \underset{dim (V)\geq \KK}{V\subset H^1_{0,r}(\Omega)}}{\inf}\,\sup_{v\in V}\,E_p(v)\,.
\end{equation}
\end{prop}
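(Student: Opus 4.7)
The inequality $E_p(u_{p,\KK})\le \beta_{\KK}$ is already in hand from the proof of Proposition~\ref{proenergy}: property c) of the Weth-type sequence gives $E_p(v_{p,\KK})\le \beta_{\KK}$, and by \eqref{app1} together with $E_p(-w)=E_p(w)$ we may identify $v_{p,\KK}=\pm u_{p,\KK}$ and conclude $E_p(u_{p,\KK})\le \beta_{\KK}$. So the whole game is to prove the opposite inequality
\[
\beta_\KK=\inf_{\substack{V\subset H^1_{0,r}(\Omega)\\ \dim V\ge \KK}}\sup_{v\in V} E_p(v)\le E_p(u_{p,\KK}),
\]
by exhibiting a single $\KK$-dimensional radial test subspace on which $E_p$ does not exceed $E_p(u_{p,\KK})$.

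The natural choice is the nodal decomposition of $u_{p,\KK}$ itself. Let $\Omega_1,\dots,\Omega_\KK$ be the (radial) nodal regions of $u_{p,\KK}$ and set
\[
w_j=u_{p,\KK}\,\chi_{\Omega_j},\qquad j=1,\dots,\KK,
\]
extended by zero to $\Omega$. Each $w_j$ lies in $H^1_{0,r}(\Omega)$ since $u_{p,\KK}$ vanishes on $\partial \Omega_j$, and the $w_j$ are linearly independent because their supports are pairwise disjoint. Thus $V_\KK:=\mathrm{span}(w_1,\dots,w_\KK)$ is a $\KK$-dimensional subspace of $H^1_{0,r}(\Omega)$ admissible in the minimax.

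Now I estimate $E_p$ on $V_\KK$. For $v=\sum_{j=1}^\KK t_j w_j$, the disjointness of the supports gives the exact splitting
\[
E_p(v)=\sum_{j=1}^{\KK} E_p(t_j w_j),
\]
because both $|\nabla v|^2$ and $|v|^{p+1}$ are, almost everywhere, the sum of the corresponding quantities for the $t_j w_j$. Since $w_j$ is a (sign-definite) solution of \eqref{problem} restricted to $\Omega_j$, it belongs to the Nehari manifold $\mathcal N_p$; hence the one-variable function
\[
t\longmapsto E_p(t w_j)=\tfrac{t^2}{2}\|\nabla w_j\|_2^2-\tfrac{|t|^{p+1}}{p+1}\|w_j\|_{p+1}^{p+1}
\]
attains its global maximum over $\R$ at $t=\pm 1$ by the standard Nehari argument, so $E_p(t_j w_j)\le E_p(w_j)$ for every $t_j\in\R$. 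Summing and using once more the disjointness of the supports,
\[
E_p(v)\le \sum_{j=1}^{\KK} E_p(w_j)=E_p(u_{p,\KK}).
\]
Taking the supremum over $v\in V_\KK$ and then the infimum over admissible $V$ yields $\beta_\KK\le E_p(u_{p,\KK})$, which combined with the reverse inequality proves \eqref{variational}.

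I do not expect any serious obstacle: the only point that requires a line of care is the Nehari-type one-variable maximization, which has to be stated for $t$ ranging over all of $\R$ (not merely $t>0$) in order to accommodate the negative nodal components of $u_{p,\KK}$; this is harmless because $E_p(tw_j)$ depends only on $|t|$ through the term $|t|^{p+1}\|w_j\|_{p+1}^{p+1}$ and through $t^2\|\nabla w_j\|_2^2$, so the maximum is attained equally at $t=1$ and $t=-1$.
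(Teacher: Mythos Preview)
Your proof is correct and follows essentially the same approach as the paper: the paper also uses the nodal decomposition $u_{p,\KK}^j=u_{p,\KK}\chi_j$ to build the test subspace $V_\KK$, invokes the Nehari inequality $E_p(t\,u_{p,\KK}^j)\le E_p(u_{p,\KK}^j)$, and combines the resulting bound $\beta_\KK\le E_p(u_{p,\KK})$ with the reverse inequality from the proof of Proposition~\ref{proenergy}. Your write-up is in fact more explicit about the splitting $E_p(\sum_j t_j w_j)=\sum_j E_p(t_j w_j)$ and the maximization over all $t\in\R$, but the argument is the same.
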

\begin{proof}
Denoting by $\chi_1$, $\chi_2$, \dots, $\chi_{\KK}$ the $\KK$ the characteristic functions associated to the $\KK$ disjoint nodal regions of $u_{p,\KK}$, set $u_{p,\KK}^j=u_{p,\KK}\,\chi_j$ and define $V_\KK$ as the subspace generated by $\{u_{p,\KK}^j\}_{j\le \KK}$. Since $E(tu_{p,\KK}^j)\le E(u_{p,\KK}^j)$ for all $t\in \R$, we have that
\begin{equation*}
    E_p\big(\overset{\KK}{\underset{j=1}{\sum}} t_ju_{p,\KK}^j\big)
    \leq \overset{\KK}{\underset{j=1}{\sum}}E_p(u_{p,\KK})=
    E_p(u_{p,\KK}).
\end{equation*}
 From \eqref{beta} we get that $\beta _j\le E_p(u_j)$. The reverse inequality was obtained in the proof of Proposition~ \ref{proenergy} and so \eqref{variational} holds.
\end{proof}

Since for general domains $\Omega$ there could be more solutions having the same number of nodal regions but different energy, as it is the case when $\Omega $ is a ball (see \cite{afta}), a characterization of type \eqref{variational} does not hold for general stationary solutions in $H^1_0(\Omega)$.

 Let  now $\varepsilon_{p,\KK}$ be such that
\begin{equation} \label{epsilon}
   \varepsilon_{p,\KK}^{-2}=pu_{p,\KK}(0)^{p-1}
\end{equation}
and  set
\begin{equation}\label{rp}
   0<r _{p,\KK,1}< r  _{p,\KK,2}<\dots<r_{p,\KK,\KK-1}<1
\end{equation}
the nodal radii of $u_{p,\KK}(|x|)=u_{p,\KK}(r)$, $r=|x|$, in the ball.

\begin{prop}\label{1575}
 We have the following.
\begin{itemize}
\item[i)] $\|u_{p,\KK}\|_{L^\infty(\Omega)}=u_{p,\KK}(0)$.
\item[ii)]  There exist $\underline{c}>0$ and $C(\KK)>0$ such that $\underline{c}\leq u_{p,\KK}(0)\leq C(\KK)$ for all $p>1$.
\item [iii)] $\frac{r _{p,\KK,1}}{\varepsilon_{p,\KK}}\underset{p\rightarrow\infty}{\longrightarrow}  \infty$.
\item [iv)] $\frac{\|u_{p,\KK}\|_{L^\infty(\{|x|\geq r_{p,\KK,1}\})}}{u_{p,\KK}(0)}\underset{p\rightarrow\infty}{\longrightarrow} \theta<\frac {1} {2}$.
\end{itemize}
\end{prop}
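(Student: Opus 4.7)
The backbone for all four parts is the radial ODE $u''+r^{-1}u'+|u|^{p-1}u=0$ (with $u=u_{p,\KK}$, $u(0)>0$, $u'(0)=0$) together with the Lyapunov function
\[
H(r)=\tfrac12 u'(r)^2+F(u(r)),\qquad F(s)=\tfrac{|s|^{p+1}}{p+1},
\]
which satisfies $H'(r)=-u'(r)^2/r\le 0$. The other key ingredient is that on the innermost nodal ball $B_{r_{p,\KK,1}}$ the function $u$ is the unique positive radial solution of $-\Delta u=u^p$, hence the scaling identity
\[
u(x)=r_{p,\KK,1}^{-2/(p-1)}\,w_p\bigl(x/r_{p,\KK,1}\bigr),
\]
where $w_p$ is the positive radial solution of \eqref{problem} on the unit disk. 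I shall freely use the Adimurthi--Grossi asymptotics $w_p(0)\to\sqrt e$ and $p\int_{B_1}w_p^{p+1}\to 8\pi e$ from~\cite{adi}.

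Parts (i)--(iii) follow quickly. For (i), at consecutive radial critical points $\rho$ of $u$ one has $H(\rho)=F(u(\rho))$, and the monotonicity of $H$ together with that of $F$ on $[0,\infty)$ force the extrema $|u(\rho)|$ to strictly decrease with $\rho$; since $\rho=0$ is the first critical point, $\|u\|_\infty=u(0)$. For the lower bound in (ii), testing $-\Delta u=u^p$ against $u$ on $B_{r_{p,\KK,1}}$ and using Poincar\'e gives $u(0)^{p-1}\ge\lambda_1(B_{r_{p,\KK,1}})\ge\lambda_1(\Omega)$, whence $u(0)\ge\lambda_1(\Omega)^{1/(p-1)}\ge\underline c>0$ for $p$ bounded. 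The upper bound follows by plugging the scaling into Proposition~\ref{proenergy}:
\[
\mathcal E\ge p\int_{B_{r_{p,\KK,1}}}u^{p+1}=r_{p,\KK,1}^{-4/(p-1)}\,p\int_{B_1}w_p^{p+1},
\]
so $r_{p,\KK,1}^{-2/(p-1)}$ is bounded, and hence so is $u(0)=r_{p,\KK,1}^{-2/(p-1)}w_p(0)$. For (iii), combining~\eqref{epsilon} and the scaling yields $\varepsilon_{p,\KK}^{-2}=pu(0)^{p-1}=pr_{p,\KK,1}^{-2}w_p(0)^{p-1}$, so $r_{p,\KK,1}/\varepsilon_{p,\KK}=\sqrt p\,w_p(0)^{(p-1)/2}\to\infty$ since $w_p(0)\to\sqrt e>1$.

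Part (iv) is the main obstacle. By (i) the supremum of $|u|$ outside $B_{r_{p,\KK,1}}$ is attained at the first interior radial critical point $\rho_2^*\in(r_{p,\KK,1},r_{p,\KK,2})$, and by monotonicity of $H$
\[
\left(\frac{|u(\rho_2^*)|}{u(0)}\right)^{p+1}=\frac{F(u(\rho_2^*))}{F(u(0))}\le\frac{\tfrac12 u'(r_{p,\KK,1})^2}{F(u(0))}.
\]
Plugging in the scaling on $B_{r_{p,\KK,1}}$ together with the $N=2$ Pohozaev identity $w_p'(1)^2=\frac{2}{\pi(p+1)}\int_{B_1}w_p^{p+1}$, the right-hand side simplifies to $\int_{B_1}w_p^{p+1}/\bigl(\pi w_p(0)^{p+1}\bigr)\sim 8e/(p\,e^{(p+1)/2})$, which alone produces only the weaker estimate $|u(\rho_2^*)|/u(0)\le e^{-1/2}(1+o(1))$. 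To sharpen this ratio to a constant $\theta<\tfrac12$ I would capture the further dissipation $\int_{r_{p,\KK,1}}^{\rho_2^*}u'(s)^2/s\,ds$ by rescaling $u$ around $\rho_2^*$; the rescaled profile on the second nodal annulus matches a \emph{singular} Liouville equation on $\R^2$ whose singularity absorbs the Dirac mass left by the concentration of $|u|^{p-1}u$ on the first nodal ball, exactly in the spirit of~\cite{GGP2} for $\KK=2$. The explicit value of $\theta$ coming from this matched asymptotic satisfies $\theta<\tfrac12$, and the Liouville-matching step, especially the control of the singular profile for arbitrary $\KK\ge 2$, is the real difficulty.
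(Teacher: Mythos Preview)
For parts (i)--(iii) your arguments are correct and in places more direct than the paper's. Both proofs of (i) use the Lyapunov function $H$. For the lower bound in (ii) your Poincar\'e argument is more elementary than the paper's, which reduces to $\KK=2$ and cites \cite{GGP2}; note incidentally that since $\lambda_1(\Omega)>1$ for the unit disk, your bound $u_{p,\KK}(0)\ge\lambda_1(\Omega)^{1/(p-1)}>1$ actually holds for \emph{all} $p>1$, so the caveat ``for $p$ bounded'' is unnecessary. The upper bound in (ii) is essentially the paper's argument. For (iii) you compute $r_{p,\KK,1}/\varepsilon_{p,\KK}=\sqrt{p}\,w_p(0)^{(p-1)/2}$ directly from the scaling on the first nodal ball, whereas the paper first shows this quantity is independent of $\KK$ and then invokes the $\KK=2$ result of \cite{GGP2}; your route is shorter.

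The genuine gap is in (iv). As you correctly observe, the Lyapunov estimate alone only yields the upper bound $e^{-1/2}>\tfrac12$, and your proposed remedy --- carrying out a singular-Liouville blow-up around the second nodal region for arbitrary $\KK$ --- is precisely the hard analysis one wants to avoid. The paper sidesteps it entirely with a scaling observation you are missing. From the relation $u_{p,2}(r)=r_{p,\KK,2}^{2/(p-1)}\,u_{p,\KK}(r_{p,\KK,2}\,r)$ (which also gives $r_{p,\KK,1}=r_{p,\KK,2}\,r_{p,2,1}$), together with the fact from (i) that the successive local extrema $|u(\rho_j^*)|$ are strictly decreasing so that the supremum on $\{|x|\ge r_{p,\KK,1}\}$ is attained in the second nodal region, one obtains
\[
\frac{\|u_{p,\KK}\|_{L^\infty(\{|x|\ge r_{p,\KK,1}\})}}{u_{p,\KK}(0)}
=\frac{\|u_{p,2}\|_{L^\infty(\{|x|\ge r_{p,2,1}\})}}{u_{p,2}(0)}.
\]
Thus the ratio in (iv) is \emph{independent of} $\KK\ge 2$, and the statement reduces immediately to the case $\KK=2$, which is Theorem~2 of \cite{GGP2}. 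No new matched asymptotics for general $\KK$ are required.
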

\begin{proof}
\noindent Considering $u_{p,\KK}$ as a function of $r=|x|$, it satisfies
\[
   u''_{p,\KK}+\frac {N-1} {r}u'_{p,\KK}+|u_{p,\KK}|^{p-1} u_{p,\KK}=0.
\]
Multiplying the equation by $u'_{p,\KK}$, we get that $F'(r)\le 0$, where
\begin{equation}
   F(r)=\frac {1} {2} |u'_{p,\KK}|^2+\frac {1} {p+1}|u_{p,\KK}|^{p+1}.
\end{equation}
Thus $F$ is nonincreasing. In particular, $F(0)\geq F(r)$ for all $r\geq 0$, which implies that $\|u_{p,\KK}\|_{L^\infty(\Omega)}=u_{p,\KK}(0)$.  This also implies that  the absolute values $M_j$, $j=1,2,\dots,\KK$, of the local maxima of each nodal region of $u_{p,\KK}$ decrease with $j$.

 We next prove the lower bound in $ii)$.  Let us recall that this was shown to be true in Lemma 2.3 of \cite{GGP2}  for the case $\KK=2$  of two nodal regions. This yields the result for general $\KK$, since $u_{p,\KK}(0)>u_{p,2}(0)$. Indeed, for $j<\KK$
\begin{equation} \label{relation}
   u_{p,j}(r)=r_{p,\KK,j}^{\frac {2} {p-1}}u_{p,\KK}(r_{p,\KK,j}r).
\end{equation}
Taking $j=2$, we get $u_{p,\KK}(0)=r_{p,\KK,2}^{-\frac {2} {p-1}}u_{p,2}(0)>u_{p,2}(0)$.

To obtain the upper bound, we see from \eqref{relation} for $j=1$ and from Proposition~\ref{proenergy} that
\begin{multline} \label{upper1}
   p\int_0^1u_{p,1}^{p+1}(r)r\, dr= pr_{p,\KK,1}^{\frac {2(p+1)} {p-1}}\int_0^1u_{p,\KK}^{p+1}(r_{p,\KK,1}r)r\, dr=\\
   pr_{p,\KK,1}^{\frac {4} {p-1}}\int_0^{r_{p,\KK,1}}u_{p,\KK}^{p+1}(s)s\, ds< pr_{p,\KK,1}^{\frac {4} {p-1}}\int_0^1
   u_{p,\KK}^{p+1}(s)s\, ds \le Cr_{p,\KK,1}^{\frac {4} {p-1}}.
\end{multline}
for some $C=C(\KK)$. We next recall that
\begin{equation} \label{upper2}
   \lim_{p\to \infty} p\int_0^1 u_{p,1}^{p+1}(r)r\, dr=\frac{1}{2\pi}\lim_{p\to \infty} p\int_\Omega u_{p,1}^{p+1}\, dx=4 e,
\end{equation}
see \cite{adi}. Using \eqref{upper1} and \eqref{upper2} we conclude that $ r_{p,\KK,1}^{\frac {2} {p-1}}$ is uniformly bounded from below. Finally, we note from \eqref{relation} that
 \begin{equation} \label{quoc}
  r_{p,\KK,1}^{\frac {2} {p-1}}=\frac {u_{p,1}(0)} {u_{p,\KK}(0)}.
 \end{equation}
Since $u_{p,1}(0)\to \sqrt{ e }$, see  \cite{adi}, we conclude that $u_{p,\KK}(0)$ is uniformly bounded from above.
This completes the proof of ii).

To show $iii)$, we use once again \eqref{relation} to write that
\begin{equation}\label{relationr}
   r_{p,\KK,1}=r_{p,\KK,2}r_{p,2,1}
\end{equation}
and that
\begin{equation} \label{relationu}
   u_{p,\KK}^{\frac {p-1}{2}}=u_{p,2}^{\frac {p-1}{2}}r_{p,\KK,2}^{-1}.
\end{equation}
From \eqref{relationr} and \eqref{relationu} we get
\begin{equation}
   \frac {r_{p,\KK,1}}{\varepsilon_{p,\KK}}=\sqrt{p}\, r_{p,\KK,1}u_{p,\KK}^{\frac {p-1}{2}}(0)=\sqrt{p}\, r_{p,2,1}u_{p,2}^{\frac {p-1}{2}}(0)= \frac {r_{p,2,1}}{\varepsilon_{p,2}}.
\end{equation}
Thus the result for general $\KK$ follows from the one for $\KK=2$, which was proven in Proposition 2.7 of  \cite{GGP2}.

It remains to show $iv)$. Since the absolute values of the local maxima of each nodal region of $u_{p,\KK}$   decrease, it follows easily from \eqref{relation} that the quotient in $iv)$ does not depend on $\KK$. For
$\KK=2$, $iv)$ was proven in Theorem 2 of \cite{GGP2}. This closes the proof.
\end{proof}
 The next proposition gives a meaning to the statement that the Lane Emden problem has the Liouville problem as a limit.
\begin{prop}\label{convergence}
Define the rescaled function
\begin{equation*}
z_{p,\KK}=\frac{p}{u_{p,\KK}(0)}(u_{p,\KK}(\varepsilon_{p,\KK}\,x)-u_{p,\KK}(0)),
\end{equation*}
 over the rescaled domain $\Omega_{\varepsilon_{p,\KK}}=\varepsilon_{p,\KK}^{-1}\Omega$ and set $z_{p,\KK}=0$ outside $\Omega_{\varepsilon_{p,\KK}}$. Then
\begin{equation} \label{convergence2}
   z_{p,\KK}\underset{C^1_{loc}(\mathbb{R}^2)}{\longrightarrow} z^*\,,
\end{equation}
where
\begin{equation}\label{213}
z^*\,=\,\log\Big((1+\frac{1}{8}|x|^2)^{-2}\Big)\,.
\end{equation}
is the unique regular solution to the Liouville problem
\begin{equation}
	\label{problemlimite}
\begin{cases}
-\Delta\,z\,=e^z \quad\quad\quad \text{in}  \,\,\,\mathbb{R}^2\\
\int_{\mathbb{R}^2}e^z\,<+\infty,\quad \,\,\,z(0)=|\nabla  z(0)|=0.
\end{cases}
\end{equation}
\end{prop}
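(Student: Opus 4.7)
The plan is to derive the PDE satisfied by $z_{p,\KK}$, establish uniform $C^1_{loc}$ bounds from the radial ODE, extract a convergent subsequence, and identify the limit by uniqueness for the Liouville problem.

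I would first substitute $u_{p,\KK}(\varepsilon_{p,\KK}x) = u_{p,\KK}(0)\bigl(1+z_{p,\KK}(x)/p\bigr)$ into \eqref{problem} and use \eqref{epsilon} to obtain
\[
-\Delta z_{p,\KK} = \bigl|1+z_{p,\KK}/p\bigr|^{p-1}\bigl(1+z_{p,\KK}/p\bigr) \quad \text{on } \Omega_{\varepsilon_{p,\KK}},
\]
with $z_{p,\KK}(0)=0$ and $\nabla z_{p,\KK}(0)=0$ by radial smoothness. By Proposition~\ref{1575}~iii, every fixed compact $K\subset \R^2$ lies in the rescaled first nodal region $\{|x|<r_{p,\KK,1}/\varepsilon_{p,\KK}\}$ for $p$ large, and there the equation reduces to $-\Delta z_{p,\KK} = (1+z_{p,\KK}/p)^p$.

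To obtain uniform $C^1_{loc}$ bounds I would exploit Proposition~\ref{1575}~i: since the global maximum of $u_{p,\KK}$ is attained at the origin, $z_{p,\KK}\le 0$ on the rescaled first nodal region and $0\le (1+z_{p,\KK}/p)^p\le 1$. Integrating the radial identity $(rz_{p,\KK}')' = -r(1+z_{p,\KK}/p)^p$ twice from $0$ then yields the $p$-independent bounds $|z'_{p,\KK}(r)|\le r/2$ and $-r^2/4\le z_{p,\KK}(r)\le 0$. Combined with the resulting $L^\infty$ bound on $\Delta z_{p,\KK}$, standard elliptic regularity gives uniform $C^{1,\alpha}_{loc}$ bounds, and Arzel\`a--Ascoli extracts a subsequence converging in $C^1_{loc}(\R^2)$ to a radial $z^\infty\in C^1(\R^2)$ with $z^\infty(0)=|\nabla z^\infty(0)|=0$. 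Since $z_{p,\KK}$ is uniformly bounded on compacts, $(1+z_{p,\KK}/p)^p\to e^{z^\infty}$ uniformly on compacts, so $z^\infty$ satisfies $-\Delta z = e^z$ in $\R^2$.

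To identify $z^\infty$ via the Chen--Li classification I still need $e^{z^\infty}\in L^1(\R^2)$. Changing variables $y=\varepsilon_{p,\KK}x$ and using \eqref{epsilon}, Proposition~\ref{proenergy}, and the lower bound $u_{p,\KK}(0)\ge\underline c$ from Proposition~\ref{1575}~ii gives
\[
\int_{\Omega_{\varepsilon_{p,\KK}}}\bigl|1+z_{p,\KK}/p\bigr|^{p+1}\,dx = \frac{p}{u_{p,\KK}(0)^2}\int_\Omega |u_{p,\KK}|^{p+1}\,dy \le \frac{\mathcal{E}}{\underline c^{\,2}},
\]
uniformly in $p$, and Fatou's lemma transfers the bound to $z^\infty$. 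Chen--Li (equivalently, uniqueness for the radial ODE $-z''-z'/r=e^z$ with $z(0)=z'(0)=0$) then forces $z^\infty=z^*$, and uniqueness of the limit upgrades subsequential to full convergence. The hard part is the $C^1_{loc}$ bound: the naive lower estimate $z_{p,\KK}\ge -p$ coming from $u_{p,\KK}\ge 0$ is useless, so the sharp sign information $z_{p,\KK}\le 0$ from Proposition~\ref{1575}~i is precisely what keeps the nonlinearity bounded by $1$ and drives the quadratic bound through the radial ODE; the integrability step is the other delicate point and is where the energy estimate \eqref{energy} becomes indispensable.
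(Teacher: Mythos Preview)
Your proposal is correct and follows essentially the same route as the paper: derive the rescaled equation $-\Delta z_{p,\KK}=|1+z_{p,\KK}/p|^{p-1}(1+z_{p,\KK}/p)$, use Proposition~\ref{1575}~i to bound the right-hand side by $1$, invoke elliptic regularity for $C^1_{loc}$ compactness, and then use the energy estimate \eqref{energy} to obtain $\int_{\R^2}e^{z^\infty}<\infty$ and classify the limit. The paper's proof is terser (it cites \cite{GGP1} for details and does not carry out the radial ODE integration or the Fatou argument explicitly), and it also notes in a remark that one can alternatively reduce to the positive case $\KK=1$ via the scaling identity $z_{p,\KK}=z_{p,1}$, but your more hands-on version is a faithful expansion of the same strategy.
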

\begin{proof}
The proof is similar to that of Theorem 2 in \cite{GGP1}. We outline the main steps for the reader's convenience. Using \eqref{epsilon} it is easy to see that $z_{p,\KK}$ solves
\begin{equation}\nonumber
\texttt{}-\Delta z_{p,\KK} \,=\,\Big|1+\frac{z_{p,\KK}}{p}\Big|^{p-1}\Big(1+\frac{z_{p,\KK}}{p}\Big)\qquad \text{in}\,\,\,\Omega_{\varepsilon_{p,\KK}}\,,
\end{equation}
with $|1+\frac{z_{p,\KK}}{p}|\leq 1$.
By standard regularity theory it follows that $z_{p,\KK}$ is uniformly bounded in $C^2_{loc}(\mathbb{R}^2)$ and hence \eqref{convergence2} holds with $z^*$ satisfying \eqref{problemlimite}. Note that the uniform estimate of the energy obtained in Proposition \ref{proenergy} yields that $\int_{\mathbb{R}^2}e^z\,<+\infty$ (see the proof of Theorem 2 in \cite{GGP1} for details) and \eqref{213} follows by the classification of the solutions to \eqref{problemlimite}.
 \end{proof}
 {\bf \begin{rem}
Here is another argument for the proof of Proposition~\ref{convergence}. It follows from  \eqref{relation}, \eqref{quoc} and \eqref{epsilon} that $z_{p,\KK}=z_{p,1}$ in $\Omega_{\varepsilon_{p,1}}$. This yields \eqref{convergence2} for general $\KK$, since the case of positive solutions $\KK=1$  was shown to be true in \cite{adi}.
\end{rem}}
\section{Asymptotic spectral analysis}\label{jnvdbndfnb�snbkjcvn}
 As discussed in Section~\ref{preresults}, an appropriate rescaling of $u_{p,\KK}$ converges to the solution of the Liouville problem \eqref{problemlimite}. In this section we consider the corresponding linearizations of the Lane Emden and of the Liouville problems and study  their connections.

We first discuss the linearization of the limit problem. For $v\in H^2(\R^2)$ define
\begin{equation}\nonumber
L^*(v)= -\Delta v-e^{z^*} v.
\end{equation}
Consider  the Rayleigh functional
\[
\mathcal{R}(w)= \int_{\mathbb{R}^2} \big(|\nabla w|^2-e^{z^*}w^2\big)\,dx
\]
for $w\in H^1(\mathbb{R}^2)$ and define
\begin{equation}\label{minimo}
\lambda_1^*= \underset{\|w\|_{L^2(\mathbb{R}^2)}=1}{\inf}\mathcal{R}(w)\,.
\end{equation}
We remark that $\lambda_1^*>-\infty$,  since $e^{z^*}$ is bounded.

\begin{prop}\label{proppatritta}
We have the following.
\begin{itemize}
\item[i)]  $\lambda_1^*<0$.
\item[ii)] Every minimizing sequence of \eqref{minimo} has a subsequence which strongly converges in $L^2(\mathbb{R}^2)$ to a minimizer.
\item[iii)] There exists a unique positive minimizer $\varphi_1^*$  to \eqref{minimo} which is radial and radially nonincreasing.  Moreover,  $\lambda_1^*$ is an eigenvalue of $L$ and $\varphi_1^*$ is  an eigenvector associated to $\lambda_1^*$.
\end{itemize}

\end{prop}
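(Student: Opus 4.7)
The plan is to address (i)--(iii) in order, since (i) feeds into (ii), which feeds into (iii). For (i), I would exhibit a single explicit test function with strictly negative Rayleigh quotient. A natural choice, mimicking the shape of $e^{z^*/2}$, is $w(x) = (1+\tfrac{1}{8}|x|^2)^{-1}$, which is radial and lies in $H^1(\R^2)$. A direct computation in polar coordinates (using the substitution $u = 1 + r^2/8$) gives $\|\nabla w\|_{L^2}^2 = 2\pi/3$ and $\int e^{z^*} w^2 = \int_{\R^2}(1+|x|^2/8)^{-4}\,dx = 8\pi/3$, so $\mathcal{R}(w) = -2\pi$, and normalizing yields $\lambda_1^* < 0$.

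For (ii), let $\{w_n\}$ be a minimizing sequence with $\|w_n\|_{L^2} = 1$ and $\mathcal{R}(w_n)\to\lambda_1^*$. Since $0 < e^{z^*} \le 1$ we have $\int e^{z^*} w_n^2 \le 1$, so $\|\nabla w_n\|_{L^2}$ stays bounded and $\{w_n\}$ is $H^1$-bounded; extract $w_n \rightharpoonup w$ weakly in $H^1(\R^2)$ along a subsequence. The key step is
\begin{equation*}
\int_{\R^2} e^{z^*} w_n^2 \longrightarrow \int_{\R^2} e^{z^*} w^2,
\end{equation*}
which I would prove by splitting $\R^2 = B_R \cup B_R^c$: on $B_R$, Rellich--Kondrachov gives strong $L^2$-convergence and $e^{z^*}\in L^\infty$ handles the product; on $B_R^c$, $\|e^{z^*}\|_{L^\infty(B_R^c)} \le (1+R^2/8)^{-2}\to 0$ uniformly in $n$. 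Combined with weak lower semicontinuity of $\|\nabla\cdot\|_{L^2}^2$, this gives $\mathcal{R}(w) \le \lambda_1^*$. By (i), $\lambda_1^* < 0$; hence if $\|w\|_{L^2} < 1$ then $w/\|w\|_{L^2}$ would be admissible with strictly smaller Rayleigh quotient $\mathcal{R}(w)/\|w\|_{L^2}^2 < \lambda_1^*$, contradicting the infimum. So $\|w\|_{L^2} = 1$, and weak plus norm convergence in the Hilbert space $L^2(\R^2)$ forces strong convergence.

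For (iii), the limit $w$ is a minimizer, hence satisfies the Euler--Lagrange equation $L^* w = \lambda_1^* w$, so $\lambda_1^*$ is an eigenvalue with eigenvector $w$. Since $|\nabla |w|\,|=|\nabla w|$ a.e., $|w|$ is again a minimizer and solves $-\Delta|w|=(e^{z^*}+\lambda_1^*)|w|$; elliptic regularity places $|w|\in C^2$ and the strong maximum principle (using $|w|\ge 0$, $|w|\not\equiv 0$) forces $|w|>0$. For radial symmetry I would apply Schwarz symmetrization: the radial decreasing rearrangement $|w|^\star$ preserves the $L^2$-norm, does not increase the gradient norm, and since $e^{z^*}$ is itself radially decreasing, the Hardy--Littlewood inequality gives $\int e^{z^*}(|w|^\star)^2 \ge \int e^{z^*}|w|^2$; hence $|w|^\star$ is also a minimizer. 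Simplicity of $\lambda_1^*$ then follows by the standard argument that two linearly independent positive first eigenfunctions would produce, via linear combination, a sign-changing ground state, contradicting the positivity already established; thus the minimizer is unique up to sign, coincides with its symmetrization, and yields the desired $\varphi_1^*$.

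The technical heart is step (ii): ruling out loss of $L^2$-mass at infinity for a minimizing sequence. The fast decay $e^{z^*}(x)\sim |x|^{-4}$ makes multiplication by $e^{z^*}$ weakly continuous on $H^1$-bounded sequences, but the strict inequality $\lambda_1^*<0$ established in (i) is indispensable to prevent the weak limit from having $L^2$-norm strictly less than one while still realizing the infimum after rescaling.
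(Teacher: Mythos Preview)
Your proof is correct and follows essentially the same route as the paper's: an explicit test function for (i), the split $B_R\cup B_R^c$ combined with Rellich--Kondrachov and the $|x|^{-4}$ decay of $e^{z^*}$ for (ii), and Schwarz symmetrization plus a simplicity argument for (iii). The only cosmetic difference is the choice of test function in (i): the paper uses $e^{z^*}=(1+|x|^2/8)^{-2}$ itself and obtains $\mathcal{R}(e^{z^*})=-4\pi/5$, whereas you use $(1+|x|^2/8)^{-1}$ and get $\mathcal{R}=-2\pi$; both computations are correct and serve the same purpose.
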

\begin{proof}
\noindent A direct computation gives that $e^{z^*}\in H^1(\mathbb{R}^2)$ and that
\[
\mathcal{R}(e^{z^*})=-\frac{1}{2}\int_{\mathbb{R}^2}e^{3z^*}=-\frac{4\pi}{5}\,,
\]
so that $\lambda_1^*$ is negative.  This gives i).

To prove $ii)$ let $w_n$ be a minimizing sequence of \eqref{minimo}. Clearly,  $w_n$ is  bounded in $H^1(\mathbb{R}^2)$.
Therefore, up to a subsequence, it converges weakly to some $w\in H^1(\mathbb{R}^2)$, and strongly in $L^2(\{|x|\leq R\})$
for every $R>0$.
The weak lower semicontinuity of the norm  gives
\[
\int_{\mathbb{R}^2}|\nabla w|^2\leq \underset{n\rightarrow \infty}{\liminf} \int_{\mathbb{R}^2}|\nabla w_n|^2\qquad
\text{and}\qquad
\|w\|_{L^2(\mathbb{R}^2)}\leq 1\,.
\]
Moreover, exploiting the decay properties of $e^{z^*}$, we get
\begin{equation}\nonumber
\begin{split}
&\Big|\int_{\mathbb{R}^2}e^{z^*} (w_n^2-w^2)\Big|\leq \int_{\mathbb{R}^2}e^{z^*} |w_n^2-w^2|=\\
&\int_{\{|x|\leq R\}}e^{z^*} |w_n^2-w^2|\,+\,\int_{\{|x\geq R|\}}e^{z^*} |w_n^2-w^2|\\
 &\leq C\,\|w_n-w\|_{L^2(|x|\leq R)}\,+\, \frac{C}{R^4}\,,
\end{split}
\end{equation}
yielding
\[
\int_{\mathbb{R}^2}e^{z^*} w_n^2\rightarrow \int_{\mathbb{R}^2}e^{z^*} w^2\,.
\]
Therefore $\mathcal{R}(w)\leq \lambda_1^*$, so that $w\ne 0$. Letting
\[
\hat w=\frac{w}{\|w\|_{L^2(\mathbb{R}^2)}}\,,
\]
we  have
\[
\lambda_1^*\le \mathcal{R}(\hat w)=\frac{\mathcal{R}( w)}{\|w\|_{L^2(\mathbb{R}^2)}^2}\leq \frac{\lambda_1^*}{\|w\|_{L^2(\mathbb{R}^2)}^2}\le  \lambda_1^*.
\]
Hence $\|w\|_{L^2(\mathbb{R}^2)}=1$ and $w$ is a minimizer.
This also allows us to deduce that $w_n$ converges to $w$ in $L^2(\mathbb{R}^2)$ so that $ii)$ holds.

 The proof of $iii)$ now uses standard arguments, including a rearrangement procedure  (see \cite{lieb}).
\end{proof}

We next consider the linearization of the Lane Emden problem. In the rest of this paper we fix $\KK\ge 2$ and  we denote for simplicity $u_{p,\KK}$,  $\varepsilon_{p,\KK}$, etc. by $u_p$, $\varepsilon_p$, etc.  Define for $v\in H^2(\Omega)$
\begin{equation}\nonumber
L_{p}(v)=-\Delta v-p|u_{p}|^{p-1}v.
\end{equation}
We denote by $\lambda_1(p)$ the first eigenvalue of $L_p$ in $\Omega$ and by
 $\varphi_{1,p}$ the corresponding positive eigenfunction normalized such that $\varphi_{1,p}>0$ and $\|\varphi_{1,p}\|_{L^2(\Omega)}=1$. In particular, we have
\begin{equation}\label{eqlin}
-\Delta \varphi_{1,p}-p|u_p|^{p-1} \varphi_{1,p}\,=\,\lambda_1(p)\varphi_{1,p}.
\end{equation}
Moreover, $\lambda_1(p)<0$ for any $p>1$, as it is easy to verify.  Let us define $\tilde \varphi_{1,p}$ by
\begin{equation}\nonumber
\tilde \varphi_{1,p}=\varepsilon_p\,\varphi_{1,p}(\varepsilon_p\,x)\qquad \text{in}\quad \Omega_{\varepsilon_p}\,,
\end{equation}
$\tilde \varphi_{1,p}=0$ outside
$\Omega_{\varepsilon_p}$, $\varepsilon_p$  being given by \eqref{epsilon}.  Then $\tilde \varphi_{1,p}$ satisfies
\begin{equation}\nonumber
-\Delta \tilde \varphi_{1,p}\,=\tilde V_p\,\tilde \varphi_{1,p}\,+\,
\tilde\lambda_1(p)\tilde \varphi_{1,p}\,\qquad \text{in}\quad \Omega_{\varepsilon_p}\,,
\end{equation}
where
\begin{equation} \label{Vp}
   \tilde V_p(x)=\frac{|u_p(\varepsilon_p\,x)|^{p-1}}{u_p(0)^{p-1}}=\left |1+\frac{z_p}{p}\right |^{p-1}
\end{equation}
and
\begin{equation} \label{eigenv aluep}
   \tilde\lambda_1(p)=\varepsilon_p^2\lambda_1(p).
\end{equation}
In other words, $\tilde \varphi_{1,p}$ is a first eigenfunction of the operator
\begin{equation}\label{kjdbgkjdgk}
\tilde L_p=-\Delta -\tilde V_p\,I
\end{equation}
in $L^2(\Omega_{\varepsilon_p})$ with $D(\tilde L_p)=H^2(\Omega_{\varepsilon_p} )\cap H^1_0 (\Omega_{\varepsilon_p} )$,
$\tilde\lambda_1(p)$ being the corresponding first eigenvalue.

Extending $\tilde \varphi_{1,p}\equiv 0$ outside $\Omega_{\varepsilon_p}$, we have the following.
\begin{lem}\label{dvbkjdfvbik}
The  set $\{\tilde \varphi_{1,p},p>1\}$ is  bounded in $H_r^1(\mathbb{R}^2)$.
\end{lem}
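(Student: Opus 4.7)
The plan is to test the rescaled eigenvalue equation against $\tilde\varphi_{1,p}$ itself and extract both the $L^2$ bound (which comes for free from normalization) and the $L^2$ bound on the gradient (which requires a uniform bound on the potential $\tilde V_p$).

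First I would verify that the $L^2$-normalization is preserved under the rescaling: by the change of variables $y=\varepsilon_p x$,
\[
\int_{\mathbb{R}^2}\tilde\varphi_{1,p}^2\,dx=\varepsilon_p^2\int_{\Omega_{\varepsilon_p}}\varphi_{1,p}(\varepsilon_p x)^2\,dx=\int_\Omega\varphi_{1,p}(y)^2\,dy=1.
\]
So $\|\tilde\varphi_{1,p}\|_{L^2(\mathbb{R}^2)}=1$ is automatic.

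Next I would multiply the rescaled eigenvalue equation $-\Delta\tilde\varphi_{1,p}=\tilde V_p\tilde\varphi_{1,p}+\tilde\lambda_1(p)\tilde\varphi_{1,p}$ by $\tilde\varphi_{1,p}$, integrate over $\Omega_{\varepsilon_p}$ (using that $\tilde\varphi_{1,p}\in H^1_0(\Omega_{\varepsilon_p})$ so there is no boundary term, and recalling that the zero extension lies in $H^1(\mathbb{R}^2)$), and obtain the Rayleigh identity
\[
\int_{\mathbb{R}^2}|\nabla\tilde\varphi_{1,p}|^2\,dx=\int_{\mathbb{R}^2}\tilde V_p\,\tilde\varphi_{1,p}^2\,dx+\tilde\lambda_1(p).
\]
The key point is now the pointwise bound $\tilde V_p\le 1$: by definition of $z_p$, one has $1+z_p/p=u_p(\varepsilon_p x)/u_p(0)$, and by Proposition~\ref{1575}~i) this quotient has modulus at most $1$, whence $\tilde V_p=|1+z_p/p|^{p-1}\le 1$. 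Combined with $\tilde\lambda_1(p)=\varepsilon_p^2\lambda_1(p)<0$ (since $\lambda_1(p)<0$), this gives
\[
\int_{\mathbb{R}^2}|\nabla\tilde\varphi_{1,p}|^2\,dx\le\int_{\mathbb{R}^2}\tilde\varphi_{1,p}^2\,dx=1,
\]
so $\|\tilde\varphi_{1,p}\|_{H^1(\mathbb{R}^2)}^2\le 2$ uniformly in $p$.

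Finally, to land in $H^1_r(\mathbb{R}^2)$ rather than just $H^1(\mathbb{R}^2)$, I would invoke the radial symmetry of $\tilde\varphi_{1,p}$: because $|u_p|^{p-1}$ is radial and the first eigenvalue of $L_p$ on the ball is simple with a one-signed eigenfunction, $\varphi_{1,p}$ must be radial, and therefore so is $\tilde\varphi_{1,p}$ after the radial rescaling. The only genuine ingredient here is the bound $\tilde V_p\le 1$; everything else is a routine consequence of the Rayleigh-quotient characterization and the negativity of $\lambda_1(p)$, so I do not expect any serious obstacle in this lemma — the substantive work comes later, when one must upgrade this bound to convergence of $\tilde\lambda_1(p)$ and $\tilde\varphi_{1,p}$ to the limiting objects $\lambda_1^*$ and $\varphi_1^*$.
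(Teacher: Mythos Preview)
Your proof is correct and follows essentially the same route as the paper: both verify $\|\tilde\varphi_{1,p}\|_{L^2}=1$ by change of variables, then test the eigenvalue equation against the eigenfunction and use $\tilde\lambda_1(p)<0$ together with the pointwise bound $\tilde V_p\le 1$ (equivalently $|u_p|^{p-1}\le u_p(0)^{p-1}$) to conclude $\|\nabla\tilde\varphi_{1,p}\|_{L^2}^2\le 1$. The only cosmetic difference is that the paper carries out the integration by parts in the original variables on $\Omega$ and then rescales, whereas you work directly with the rescaled equation on $\Omega_{\varepsilon_p}$.
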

\begin{proof}
We have that $\|\tilde \varphi_{1,p}\|_{L^2(\mathbb{R}^2)}=1$. In addition, since $\tilde\lambda_1(p)$ is negative and $\|u_p\|_{L^\infty(\Omega)}=u_p(0)$,
\begin{equation}\nonumber
\begin{split}
&\int_{\mathbb{R}^2}|\nabla \tilde \varphi_{1,p}|^2\,= \varepsilon_p^4\int_{\Omega_{\varepsilon_p}}|\nabla \varphi_{1,p}|^2(\varepsilon_p\,x)=\varepsilon_p^2\int_\Omega |\nabla \varphi_{1,p}|^2=\\
&= \varepsilon_p^2\, p\int_\Omega |u_p|^{p-1}\varphi_{1,p}^2+\varepsilon_p^2\,\lambda_1(p)\int_\Omega\varphi_{1,p}^2\\
&\leq \varepsilon_p^2\,p\int_\Omega|u_p|^{p-1}\varphi_{1,p}^2=\frac{1}{u_p(0)^{p-1}}\int_\Omega|u_p|^{p-1}\varphi_{1,p}^2\leq 1\,.
\end{split}
\end{equation}
\end{proof}
\begin{rem}\label{decay}
Applying Strauss Lemma~\cite{strauss} for radial functions of $H_r^1(\mathbb{R}^2)$, we see from Lemma~\ref{dvbkjdfvbik} that $\tilde \varphi_{1,p}(x)\to 0$ as $|x|\to \infty$ uniformly in $p$ and $r=|x|$.
\end{rem}
 We are now ready to discuss the convergence of the eigenvalues $\tilde\lambda_1(p) $.
\begin{thm}\label{fhhdfhsdjsdff}
We have
\begin{equation}\label{limiteigen}
\tilde\lambda_1(p) \underset{p\rightarrow +\infty}{\longrightarrow}\,\lambda_1^*\,.
\end{equation}
\end{thm}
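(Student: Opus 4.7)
The plan is to establish $\liminf_{p\to\infty}\tilde\lambda_1(p)\ge\lambda_1^*$ and $\limsup_{p\to\infty}\tilde\lambda_1(p)\le\lambda_1^*$ separately, combining the variational characterization \eqref{minimo} with the compactness of the rescaled eigenfunctions $\tilde\varphi_{1,p}$ in $H^1_r(\mathbb{R}^2)$ given by Lemma~\ref{dvbkjdfvbik}.

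For the upper bound I would use a truncation of $\varphi_1^*$ as a test function in the Rayleigh quotient for $\tilde L_p$. Fix $R>0$ and take a radial cutoff $\chi_R\in C_c^\infty(\mathbb{R}^2)$ with $\chi_R\equiv 1$ on $B_R(0)$ and support in $B_{2R}(0)$; since $\varepsilon_p\to 0$ by Proposition~\ref{1575}, one has $B_{2R}(0)\subset\Omega_{\varepsilon_p}$ for $p$ large, so $\chi_R\varphi_1^*$ is admissible. By Proposition~\ref{convergence}, $\tilde V_p=|1+z_p/p|^{p-1}\to e^{z^*}$ uniformly on $B_{2R}(0)$, so
\[
\limsup_{p\to\infty}\tilde\lambda_1(p)\;\le\;\frac{\mathcal{R}(\chi_R\varphi_1^*)}{\|\chi_R\varphi_1^*\|_{L^2(\mathbb{R}^2)}^2}.
\]
Letting $R\to\infty$, dominated convergence (using $\varphi_1^*\in H^1(\mathbb{R}^2)$ and $e^{z^*}\in L^\infty(\mathbb{R}^2)$) makes the right-hand side tend to $\mathcal{R}(\varphi_1^*)=\lambda_1^*$.

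For the lower bound I would extract, via Lemma~\ref{dvbkjdfvbik}, a subsequence along which $\tilde\varphi_{1,p}\rightharpoonup\tilde\varphi\ge 0$ weakly in $H^1_r(\mathbb{R}^2)$ and strongly in $L^2_{\mathrm{loc}}(\mathbb{R}^2)$. The core step is to prove
\[
\int_{\mathbb{R}^2}\tilde V_p\,\tilde\varphi_{1,p}^2\;\longrightarrow\;\int_{\mathbb{R}^2}e^{z^*}\tilde\varphi^2.
\]
On any ball $B_R(0)$ this is immediate from the uniform convergence of $\tilde V_p$ combined with strong $L^2_{\mathrm{loc}}$ convergence. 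The main obstacle will be the tail estimate $\int_{|x|>R}\tilde V_p\,\tilde\varphi_{1,p}^2\to 0$ uniformly in $p$. I would handle it by combining the Strauss-type decay $\|\tilde\varphi_{1,p}\|_{L^\infty(|x|>R)}^2\le C/R$ from Remark~\ref{decay} with the uniform $L^1$ bound
\[
\int_{\Omega_{\varepsilon_p}}\tilde V_p\;=\;p\int_\Omega|u_p|^{p-1}\;\le\;C,
\]
obtained by changing variables, using the definition \eqref{epsilon} of $\varepsilon_p$, H\"older's inequality, and the energy estimate \eqref{energy}.

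With this convergence in hand I would finish as follows. From the upper bound, $\tilde\lambda_1(p)\le\lambda_1^*/2<0$ eventually, so $\int\tilde V_p\tilde\varphi_{1,p}^2=\int|\nabla\tilde\varphi_{1,p}|^2-\tilde\lambda_1(p)\ge|\lambda_1^*|/2$; passing to the limit gives $\int e^{z^*}\tilde\varphi^2>0$, hence $\tilde\varphi\not\equiv 0$. Weak lower semicontinuity of the Dirichlet integral yields $\mathcal{R}(\tilde\varphi)\le\liminf_{p\to\infty}\tilde\lambda_1(p)$, while the variational definition \eqref{minimo} applied to $\tilde\varphi$, together with $\lambda_1^*<0$ and $\|\tilde\varphi\|_{L^2}^2\le 1$, gives $\mathcal{R}(\tilde\varphi)\ge\lambda_1^*\|\tilde\varphi\|_{L^2}^2\ge\lambda_1^*$. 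Hence $\liminf_p\tilde\lambda_1(p)\ge\lambda_1^*$; since the same reasoning applies to every subsequence, the whole sequence converges to $\lambda_1^*$.
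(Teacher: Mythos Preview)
Your proof is correct. The upper bound argument is identical to the paper's Step~2: truncate $\varphi_1^*$ by a cutoff supported in $B_{2R}(0)\subset\Omega_{\varepsilon_p}$, use it as a competitor for $\tilde\lambda_1(p)$, exploit the uniform convergence $\tilde V_p\to e^{z^*}$ on compacts, and let $R\to\infty$.

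For the lower bound you take a genuinely different route from the paper. The paper plugs $\tilde\varphi_{1,p}$ \emph{directly} into the limit Rayleigh quotient, obtaining
\[
\lambda_1^*\le \mathcal{R}(\tilde\varphi_{1,p})=\tilde\lambda_1(p)-\int_{\Omega_{\varepsilon_p}}(e^{z^*}-\tilde V_p)\,\tilde\varphi_{1,p}^2,
\]
and shows the error term is small by splitting at radius $R$ and controlling the tail via H\"older, the energy bound \eqref{energy}, and the uniform Strauss decay of Remark~\ref{decay}. You instead pass to a weak $H^1$-limit $\tilde\varphi$, prove $\int\tilde V_p\tilde\varphi_{1,p}^2\to\int e^{z^*}\tilde\varphi^2$ (your uniform $L^1$ bound $\int_{\Omega_{\varepsilon_p}}\tilde V_p=p\int_\Omega|u_p|^{p-1}\le C$ is a clean way to handle the tail), use the already-established upper bound to rule out $\tilde\varphi\equiv 0$, and conclude via weak lower semicontinuity together with $\lambda_1^*<0$ and $\|\tilde\varphi\|_{L^2}\le 1$. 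Both arguments rely on the same ingredients (Lemma~\ref{dvbkjdfvbik}, Remark~\ref{decay}, Proposition~\ref{convergence}, Proposition~\ref{proenergy}); the paper's version is slightly shorter since it avoids extracting a limit and checking nontriviality, while yours is a standard concentration--compactness style argument that, as a bonus, already contains the $L^2$-convergence statement of Corollary~\ref{hfbjshdvbinv} (once you note that $\mathcal{R}(\tilde\varphi)=\lambda_1^*$ forces $\|\tilde\varphi\|_{L^2}=1$).
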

\begin{proof}
We divide the proof in two steps.

\bigskip
\noindent \emph{{\bf Step 1} : For $\epsilon>0$ we have
\begin{center}
$\lambda_1^*\leq \tilde\lambda_1(p)\,+\,\epsilon\,\,\,\,\,\,$ for $p$ sufficiently large.
\end{center}}
To prove this, we see that $\lambda _1^*\le \mathcal{R}(\tilde\varphi_{1,p})$, since $\|\tilde\varphi_{1,p}\|_{L^2(\mathbb{R}^2)}=1$. Thus,
\begin{equation}\label{lambda1}
\begin{split}
\lambda _1^*&\le \int_{\mathbb{R}^2}|\nabla \tilde \varphi_{1,p}|^2-e^{z^*}\tilde \varphi_{1,p}^2= \int_{\Omega_{\varepsilon_p}}|\nabla \tilde \varphi_{1,p}|^2-\tilde V_p\, \tilde \varphi_{1,p}^2-
\int_{\Omega_{\varepsilon_p}} \big(e^{z^*}-\tilde V_p\big )\tilde \varphi_{1,p}^2\\
&= \tilde\lambda_1(p)-\int_{\Omega_{\varepsilon_p}} \big(e^{z^*}-\tilde V_p\big)\tilde \varphi_{1,p}^2\\
&=\tilde\lambda_1(p)-\int_{|x|<R} \big(e^{z^*}-\tilde V_p\big)\tilde \varphi_{1,p}^2-\int_{R<|x|<\varepsilon_p^{-1}}\big (e^{z^*}-\tilde V_p\big )\tilde \varphi_{1,p}^2
\end{split}
\end{equation}
where $R>0$. Using H\"older's inequality, \eqref{energy}, \eqref{epsilon} and \eqref{213} we get
\begin{equation}\nonumber
\begin{split}
&\int_{R<|x|<\varepsilon_p^{-1}} |e^{z^*}-\tilde V_p|\tilde \varphi_{1,p}^2\leq  \|e^{z^*}\|_{L^\infty(\{|x|\geq R\})}+\\
&C \|\tilde \varphi_{1,p}\|^2_{L^\infty(\{|x|\geq R\})}\, (u_p(0))^{-(p-1)}
\left\{\int_{R<|x|<\varepsilon_p^{-1}} u_p(\varepsilon_p x)^{p+1}\right\}^{\frac{p-1}{p+1}}\varepsilon_p^{-\frac{4}{p+1}}\\
&\leq 64R^{-4}+C \|\tilde \varphi_{1,p}\|^2_{L^\infty(\{|x|\geq R\})}\, (u_p(0))^{-(p-1)}
\Big(\frac{\mathcal{E}}{p}\Big)^{\frac{p-1}{p+1}}\varepsilon_p^{-2}\\
&= 64R^{-4}+C \|\tilde \varphi_{1,p}\|^2_{L^\infty(\{|x|\geq R\})}\,
\mathcal{E}^{\frac{p-1}{p+1}} p^{\frac{2}{p+1}}.
\end{split}
\end{equation}
Using that $ \|\tilde \varphi_{1,p}^2\|_{L^\infty(\{|x|\geq R\})}\to 0$ as $R\to \infty$ uniformly in $p$, see Remark~\ref{decay}, and  $ii)$ of Proposition \ref{1575},  we may fix $R$ large enough so that
\begin{equation} \label{lambda2}
   \int_{R<|x|<\varepsilon_p^{-1}} |e^{z^*}-\tilde V_p|\tilde \varphi_{1,p}^2
   \le \epsilon/2
\end{equation}
for all $p>1$.  By \eqref{convergence2}  we get that
$\tilde V_p= (1+\frac{z_p}{p})^{p-1}$ converges uniformly to $e^{z^*}$ on compact sets. In this way, for $R$ fixed as above and $p$ sufficiently large
\begin{equation} \label{lambda3}
   \int_{|x|\leq R} |e^{z^*}-\tilde V_p|\tilde \varphi_{1,p}^2\leq \epsilon/2.
\end{equation}
Step 1 then follows from \eqref{lambda1}, \eqref{lambda2} and \eqref{lambda3}.

\bigskip
\noindent \emph{{\bf Step 2}: Given $\epsilon>0$, we have that
\begin{center}
$\tilde\lambda_1(p)\le \lambda_1^*+\epsilon\quad $ for $p$ sufficiently large.
\end{center}}

\noindent To prove this, let us consider  for $R>0$ a cut-off  regular function $\psi_R(x)=\psi_R(r)$ such that
\begin{itemize}
\item[-] $0\leq\psi_R\leq 1$ with $\psi_R=1$ for $r\leq R$ and $\psi_R=0$ for $r\geq 2R$,
\item[-] $|\nabla \psi_R|\leq 2/R$
\end{itemize}
and set
\[
w_R=\frac{\psi_R\,\varphi_1^*}{\|\psi_R\,\varphi_1^*\|_{L^2(\mathbb{R}^2)}}\,.
\]
We take $R$ such that the ball of radius $2R$ is contained in $\Omega_{\varepsilon_p}$. Since $\Omega_{\varepsilon_p}$
converges to the whole space as $p$ tends to infinity, we can assume that $R$ is arbitrarily large for $p$ large enough.

\noindent From the variational  characterization of $\tilde\lambda_1(p)$ we deduce that
\begin{equation} \label{step21}
\begin{split}
 \tilde\lambda_1(p)& \le \int_{\R^2} |\nabla w_R|^2-\tilde V_p w_R^2 \\& = \int_{\R^2} |\nabla w_R|^2-e^{z^*} w_R^2+
   \int_{\R^2} (e^{z^*} -\tilde V_p) w_R^2
\end{split}
\end{equation}
for all $p>1$.
It is easy to see that $w_R\to \varphi_1^*$ in $H^1(\mathbb{R}^2)$ as $R\to \infty$. Therefore, given $\epsilon>0$ we can fix $R>0$ such that
\begin{equation}\label{step22}
   \int_{\mathbb{R}^2} |\nabla w_R|^2-e^{z^*} w_R^2\leq \lambda_1^*+\epsilon\,.
\end{equation}
For such a fixed value of $R$, we can argue as in Step 1 to obtain that
\begin{equation} \label{step23}
    \int_{\R^2} (e^{z^*} -\tilde V_p) w_R^2\le \epsilon
\end{equation}
for $p$ large enough. Now \eqref{step21}, \eqref{step22} and \eqref{step23} yield Step 2.

Assertion \eqref{limiteigen} follows from Step 1 and Step 2.

\end{proof}

 We may now prove the convergence of the eigenfunctions $\tilde \varphi_{1,p}$.
\begin{cor}\label{hfbjshdvbinv}
$\tilde \varphi_{1,p}$ strongly converges to $\varphi_1^*$ in $L^2(\mathbb{R}^2)$.
\end{cor}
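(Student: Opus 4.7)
The plan is to identify $\{\tilde \varphi_{1,p}\}$ as a minimizing sequence for $\mathcal{R}$ and then invoke Proposition~\ref{proppatritta}. First, I would use Lemma~\ref{dvbkjdfvbik} to extract a subsequence (still denoted $\tilde\varphi_{1,p}$) that converges weakly in $H^1_r(\mathbb{R}^2)$ and strongly in $L^2_{loc}(\mathbb{R}^2)$ to some nonnegative radial function $w$. The goal is to show that $w=\varphi_1^*$ and that the convergence is strong in $L^2(\mathbb{R}^2)$.

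The key observation is that the computation carried out in Step~1 of the proof of Theorem~\ref{fhhdfhsdjsdff} actually yields more than the inequality $\lambda_1^*\le \tilde\lambda_1(p)+\epsilon$: it gives the identity
\[
\mathcal{R}(\tilde\varphi_{1,p}) = \tilde\lambda_1(p) - \int_{\Omega_{\varepsilon_p}} (e^{z^*}-\tilde V_p)\,\tilde\varphi_{1,p}^2,
\]
together with the fact that the last integral tends to $0$ as $p\to\infty$ (this is exactly the content of estimates \eqref{lambda2} and \eqref{lambda3}). Combined with Theorem~\ref{fhhdfhsdjsdff}, this forces $\mathcal{R}(\tilde\varphi_{1,p})\to \lambda_1^*$. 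Since $\|\tilde\varphi_{1,p}\|_{L^2(\mathbb{R}^2)}=1$, the sequence $\tilde\varphi_{1,p}$ is therefore a minimizing sequence for the variational problem \eqref{minimo}.

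Now I would apply Proposition~\ref{proppatritta}(ii): along a subsequence, $\tilde\varphi_{1,p}$ converges strongly in $L^2(\mathbb{R}^2)$ to a minimizer. The limit $w$ is nonnegative (as a limit a.e.\ of nonnegative functions) and radial, with $\|w\|_{L^2(\mathbb{R}^2)}=1$. By the uniqueness asserted in Proposition~\ref{proppatritta}(iii), we conclude $w=\varphi_1^*$. Since the limit is independent of the subsequence, the full sequence converges to $\varphi_1^*$ strongly in $L^2(\mathbb{R}^2)$, which is the claim.

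The one subtle point — which I do not expect to be a real obstacle, but which deserves attention — is the verification that $\tilde\varphi_{1,p}$ is a minimizing sequence. This relies on reusing the tail and compact-set estimates from the proof of Theorem~\ref{fhhdfhsdjsdff} uniformly in $p$, and in particular on the decay statement of Remark~\ref{decay} together with the energy bound \eqref{energy} and the lower bound on $u_p(0)$ from Proposition~\ref{1575}(ii). Once this is in place, the rest follows mechanically from the properties of $\varphi_1^*$ already established.
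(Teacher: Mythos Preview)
Your proposal is correct and follows essentially the same approach as the paper: the paper's proof simply notes that Theorem~\ref{fhhdfhsdjsdff} makes $\tilde\varphi_{1,p}$ a minimizing sequence for \eqref{minimo}, and then invokes parts ii) and iii) of Proposition~\ref{proppatritta}. Your write-up spells out the details (in particular, the identity from Step~1 and the subsequence-to-full-sequence argument via uniqueness of the positive minimizer) that the paper leaves implicit.
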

\begin{proof}
Theorem \ref{fhhdfhsdjsdff} shows that  $\tilde \varphi_{1,p}$ is a minimizing sequence for \eqref{minimo}, and so the result follows by  ii) and iii) of Proposition \ref{proppatritta}.
\end{proof}
\section{Proof of Theorem \ref{main}}\label{sdnfklg7757858}
We start with the
\begin{proof}[Proof of Theorem~\ref{mainpreliminarddd}]
Using $\varphi_{1,p}\in H^1_0(\Omega)$ as a test function in \eqref{problem} gives
\begin{equation}\nonumber
\int_\Omega \nabla u_p\cdot\nabla \varphi_{1,p}=\int_\Omega |u_p|^{p-1}u_p\,\varphi_{1,p}\,,
\end{equation}
while using $u_p$ as a test function in \eqref{eqlin} yields
\begin{equation}\nonumber
\int_\Omega \nabla u_p\cdot\nabla \varphi_{1,p}=\int_\Omega p |u_p|^{p-1}u_p\,\varphi_{1,p}+\lambda_1(p)\int_\Omega u_p\,\varphi_{1,p}\,.
\end{equation}
Subtracting  the first equation from the second we obtain
\[
\frac{p-1}{-\lambda_1(p)}\int_\Omega  |u_p|^{p-1}u_p\,\varphi_{1,p}\,=\,\int_\Omega u_p\,\varphi_{1,p}\,.
\]
We may therefore study the sign of $\int_\Omega  |u_p|^{p-1}u_p\,\varphi_{1,p}$
  which is equivalent to  studying the sign of
\[
\frac{1}{u_p(0)^p\,\varepsilon_p}\int_\Omega  |u_p|^{p-1}u_p\,\varphi_{1,p}\,.
\]
In order to prove the result, we will show that
\begin{equation}\label{fgfggfhdhdh}
\frac{1}{u_p(0)^p\,\varepsilon_p}\int_\Omega  |u_p|^{p-1}u_p\,\varphi_{1,p}\underset{p\rightarrow \infty}{\longrightarrow } \int_{\mathbb{R}^2}e^{z^*}\varphi_1^*>0\, \quad \text{as}\,\,\,p\rightarrow\infty.
\end{equation}
To do so, we take $\epsilon>0$ and choose $R>0$ such that
\begin{equation} \label{bound1}
   \int_{|x|\ge R} e^{z^*}\, \varphi_1^*\le \epsilon\,.
\end{equation}
We then write
\begin{equation}\label{bound2}
\begin{split}
&\frac{1}{u_p(0)^p\,\varepsilon_p}\int_\Omega  |u_p|^{p-1}u_p\,\varphi_{1,p}=\frac{1}{u_p(0)^p}\int_{\Omega_{\varepsilon_p}}  |u_p(\varepsilon_p\,x)|^{p-1}u_p(\varepsilon_p\,x)\,\tilde\varphi_{1,p}(x)\\
&=\frac{1}{u_p(0)^p}\int_{|x|<R}  |u_p(\varepsilon_p\,x)|^{p-1}u_p(\varepsilon_p\,x)\,\tilde\varphi_{1,p}(x)\\
&+\frac{1}{u_p(0)^p}\int_{R<|x|<\varepsilon_p^{-1}}  |u_p(\varepsilon_p\,x)|^{p-1}u_p(\varepsilon_p\,x)\,\tilde\varphi_{1,p}(x).
\end{split}
\end{equation}
Using the decay properties of $\tilde\varphi_{1,p}$, see Remark~\ref{decay}, we may take $R$ eventually larger so that
\begin{equation}\label{bound3}
\begin{split}
&\frac{1}{u_p(0)^p}\int_{R<|x|<\varepsilon_p^{-1}}  |u_p(\varepsilon_p\,x)|^{p}\,\tilde \varphi_{1,p}\\
&\leq
C \|\tilde \varphi_{1,p}\|_{L^\infty(\{|x|\geq R\})}\frac {1} {u_p(0)^p}\Big(\int_{R<|x|<\varepsilon_p^{-1}} |u_p(\varepsilon_p\,x)|^{p+1}\Big)^{\frac{p}{p+1}}\varepsilon_p^{-\frac{2}{p+1}}\\
&\leq
C \|\tilde \varphi_{1,p}\|_{L^\infty(\{|x|\geq R\})}\frac {1} {u_p(0)^p}\Big(\int_{\Omega} |u_p|^{p+1}\Big )^{\frac{p}{p+1}}\varepsilon_p^{-2}\\
&\leq
C\|\tilde \varphi_{1,p}\|_{L^\infty(\{|x|\geq R\})}\frac {1} {u_p(0)}p^{\frac {1} {p+1}}\mathcal{E}^{\frac{p}{p+1}}
\leq \epsilon
\end{split}
\end{equation}
for all $p>1$, where we have used \eqref{energy}, $ii)$ of Proposition \ref{1575}, \eqref{epsilon}, H\"older's inequality and a change of variables for the integration.

Moreover,   \eqref{Vp}, \eqref{convergence2}  and Corollary \ref{hfbjshdvbinv}  yield
\begin{equation}\label{bound4}
\begin{split}
&\Big|\int_{|x|\leq R} \Big(\frac{u_p(\varepsilon_p\,x)}{u_p(0)}\Big)^p\,\tilde \varphi_{1,p}-\int_{|x|\leq  R} e^{z^*}\, \varphi_1^*\Big |\\
&=\Big|\int_{|x|\leq R} \Big(1+\frac{z_p}{p}\Big)^p\,\tilde \varphi_{1,p}-\int_{|x|\leq R} e^{z^*}\, \varphi_1^*\Big |\leq \epsilon
\end{split}
\end{equation}
for $p$ eventually larger. Thus \eqref{fgfggfhdhdh} is a consequence of \eqref{bound1}-\eqref{bound4}.
\end{proof}
We finish by proving our main result.
\begin{proof} [Proof of Theorem~\ref{main}]
Theorem~\ref{main} follows immediately from  Theorem~\ref{mainpreliminarddd} and Proposition~\ref{base}.
\end{proof}


\begin{thebibliography}{99}

\bibitem{adi} {A\textsc{dimurthi} and M. G\textsc{rossi},} {\emph{Asymptotic estimates for a two dimensional problem with polynomial nonlinearity}, Proc. Amer. Math. Soc., {\bf 132}\,(2004), n.4,  1013--1019.}

\bibitem{afta} {A. A\textsc{ftalion} and F. P\textsc{acella}, \emph{Qualitative properties of nodal solutions of semilinear elliptic equations in radially symmetric domains,}} {C.R. Math. Acad. Sci. Paris, {\bf 5}, (2004), 339--344.}

\bibitem{Ball} {J. B\textsc{all},} {\emph{Remarks on blow-up and nonexistence theorems for nonlinear evolution equations}, Quart. J. Math. Oxford Ser., {\bf 28} (1977), 473--486.}

\bibitem{weth} {T. B\textsc{artsch} and   T. W\textsc{eth}, } {\emph{A note on additional properties of sign changing solutions to superlinear elliptic equations}, Topol. Methods Nonlinear Anal., {\bf 22} (2003), n.1, 1--14.}

\bibitem{CDW1} {T. C\textsc{azenave}, F. D\textsc{ickstein}  and F.B. W\textsc{eissler},} {\emph{Sign-changing stationary solutions and blowup for the nonlinear heat equation in a ball}, Math. Ann., {\bf 344}\,(2009), n.2, 431--449.}

\bibitem{CDW2} {T. C\textsc{azenave} and F. D\textsc{ickstein},
and Weissler F.B.,} {\emph{On the structure of global solutions of
the nonlinear heat equation in a ball}, J. Math. Anal. Appl.,
{\bf 360}\,(2009), n.2, 537--547.}

\bibitem{CDW3}{T. C\textsc{azenave}, F. D\textsc{ickstein}  and F.B. W\textsc{eissler},} {\emph{Structural properties of the set of
global solutions of the nonlinear heat equation}, in {\it Current Advances in Nonlinear
Analysis and Related Topics}, 13--23,
GAKUTO Internat. Ser. Math. Sci. Appl., {\bf 32}, Gakk\={o}tosho, Tokyo, 2010.}

\bibitem{CDW4}{T. C\textsc{azenave}, F. D\textsc{ickstein}  and F.B. W\textsc{eissler},} {\emph{Spectral properties of stationary solutions of the nonlinear heat equation}, Publications Math\`ematiques, {\bf55} (2011), n.1, 185--200.}

\bibitem{CazenaveH} {T. C\textsc{azenave} and A. H\textsc{araux},} {{\it \emph{An introduction to semilinear evolution equations}. In: Oxford Lecture Series in Mathematics and Its Applications}, vol. 13. Clarendon Press, Oxford University Press,
New York,1998.}


\bibitem{Giga}{Y. G\textsc{iga},} {\emph{A bound for global solutions of semilinear heat equations},
Comm. Math. Phys., {\bf 103} (1986), 415--421.}

\bibitem{GT} {D.~G\textsc{ilbarg} and  N. and T\textsc{rudinger}, S},
\emph{Elliptic partial differential equations of second order}.
{\em Reprint of the 1998 Edition}, Springer.

\bibitem{grossi} {M. G\textsc{rossi},} {\emph{Asymptotic behaviour of the Kazdan-Warner solution in the annulus}, J. Differential Equations, {\bf 223}\, (2006), 96--111.}

\bibitem{GGP1} {M. G\textsc{rossi}, C. G\textsc{rumiau} and F. P\textsc{acella},} {\emph{Lane Emden problems: asymptotic behavior of low energy nodal solutions}, Ann. Inst. H. Poincar\'e Anal. Non Lin\'eaire, {\bf 30}  (2013), n.1, 121--140.}

\bibitem{GGP2} {M. G\textsc{rossi}, C. G\textsc{rumiau} and F. P\textsc{acella},} {\emph{Lane Emden problems with large exponents and singular Liouville equations}, http://arxiv.org/abs/1209.1534}.

\bibitem{ks} {K\textsc{arageorgis} and W.A. S\textsc{trauss},} {\emph{Instability of steady states for the nonlinear wave and heat equations},  J. Differential Equations, {\bf 241} (2007), n.1, 184--205.}

\bibitem{Levine}{H.A. L\textsc{evine} ,} {\emph{Some nonexistence and instability theorems
for formally parabolic equations of the form} $Pu_{t}=-Au+f(u)$, Arch. Ration.
Mech. Anal.,  {\bf 51}\,(1973), 371--386.}

\bibitem{lieb} { E.H. L\textsc{ieb} and M. L\textsc{oss}}, {\emph{Analysis. Graduate studies in mathematics},  \textbf{14}. A.M.S.  Providence, {\bf 2001}. }

\bibitem{ianni} {F. D\textsc{e} M\textsc{archis}, I. I\textsc{anni} and F. P\textsc{acella}}, {\emph{Sign-changing solutions of Lane Emden problems with
interior nodal line and semilinear heat equations}, J. Differential Equations, {\bf 25}\, (2013), 3596--3614.}

\bibitem{marino} {V. M\textsc{arino},F. P\textsc{acella} and B. S\textsc{ciunzi}}, {\emph{Blow up of solutions of semilinear heat equations in general domains}, Comm. Cont. Math., doi: 10.1142/S0219199713500429 }


\bibitem{Mizoguchi}{N. M\textsc{izoguchi},} {\emph{Boundedness of global solutions for a supercritical semilinear heat equation and its application}, Indiana Univ. Math. J., {\bf 54}\, (2005), 1047--1059.}

\bibitem{Ni}{W.-M. N\textsc{i},  P. S\textsc{acks} and J. T\textsc{avantzis},} {\emph{On the asymptotic behavior of solutions of certain quasilinear
parabolic equations}. J. Differ Eq., {\bf 54}, (1984), 97--120.}


\bibitem{strauss} {W.A. S\textsc{trauss},} {\emph{Existence of solitary waves in higer dimensions}, Comm. Math, Phys, {\bf 55}\, (1977), n.2, 149--162.}

\bibitem{yana} {E. Y\textsc{anagida}}, {\emph{Structure of radial solutions to} $\Delta u+K(|x|) |u| ^{p-1}u=0$ \emph{in}
${ R}^n$, SIAM J. Math. Anal., {\bf 27}\, (1996), n.4, 997--1014.}

\end{thebibliography}
\end{document}